\newcommand{\N}{\mathbb{N}}
\newcommand{\E}{\mathbb{E}}
\newcommand{\Z}{\mathbb{Z}}
\newcommand{\T}{\mathbb{T}}
\newcommand{\pp}{\mathbb{P}}
\newcommand{\kD}{\mathcal{D}}
\newcommand{\kO}{\mathcal{O}}
\newcommand{\kE}{\mathcal{E}}
\newcommand{\kI}{\mathcal{I}}
\newcommand{\kS}{\mathcal{S}}
\newcommand{\kN}{\mathcal{N}}
\newcommand{\pn}{\mathbb{P} }
\newcommand{\en}{\mathbb{E} }
\newcommand{\bin}{\textrm{Bin}}
\newcommand{\poi}{\textrm{Poi}}
\newcommand{\lin}{\left[\kern-0.15em\left[}
\newcommand{\rin} {\right]\kern-0.15em\right]}
\newcommand{\linf}{[\kern-0.15em [}
\newcommand{\rinf} {]\kern-0.15em ]}
\newcommand{\ilin}{\left]\kern-0.15em\left]}
\newcommand{\irin} {\right[\kern-0.15em\right[}
\newtheorem{lem}{Lemma}[section]
\newtheorem{prop}[lem]{Proposition}
\newtheorem{theo}[lem]{Theorem}
\newtheorem {rem}[lem] {Remark}
\title [Contact process on inhomogeneous random graphs]
{Exponential extinction time of the contact process on rank-one inhomogeneous random graphs%\thanks{Grants or other notes
%about the article that should go on the front page should be
%placed here. General acknowledgments should be placed at the end of the article.}
}
\author{ Van Hao Can %etc.
}
\address{  Aix Marseille Universit\'e, CNRS, Centrale Marseille, I2M, UMR 7373, 13453 Marseille, France}
\address{ Institute of Mathematics, Vietnam Academy of Science and Technology, 18 Hoang Quoc Viet, 10307 Ha Noi, Viet Nam}
              \email{cvhao89@gmail.com}           %  \\
\begin{document}

\maketitle
\begin{abstract}
We show that the contact process on the rank-one inhomogeneous random graphs and Erdos-R\'enyi graphs with mean degree large enough  survives a time exponential in the size of these graphs for any positive infection rate. In addition, a metastable result for the extinction time is also proved.
\end{abstract}
\section{Introduction}
The contact process was introduced in  \cite{H} by T. E. Harris and is also often interpreted as a model for the spread of a virus in a population.  Given a locally finite graph $G=(V,E)$ and $\lambda >0$, the contact process on $G$ with infection rate $\lambda$ is a Markov process $(\xi_t)_{t\geq 0}$ on $\{0,1\}^V$. Vertices of $V$ (also called sites) are regarded as individuals which are either infected (state $1$) or healthy (state $0$). By considering $\xi_t$ as a subset of $V$ via $\xi_t \equiv \{v: \xi_t(v)=1\},$ the transition rates are given by
\begin{align*}
\xi_t \rightarrow \xi_t \setminus \{v\} & \textrm{ for $v \in \xi_t$ at rate $1,$ and } \\
\xi_t \rightarrow \xi_t \cup \{v\} & \textrm{ for $v \not \in \xi_t$ at rate }  \lambda \, |\{w \in \xi_t: \{v,w\} \in E\}|.
\end{align*}
 Given  $A \subset V$, we denote by $(\xi_t^A)_{t \geq 0}$ the contact process with initial configuration $A$ and if $A=\{v\}$ we simply write $(\xi^v_t)$. 

Since the contact process is monotone in $\lambda$,  we can define the critical value 
\begin{align*}
 \lambda_c(G)= \inf \{\lambda: \pp(\xi ^v_t \neq \varnothing \,\forall t) >0\}.
\end{align*} 
If $G$ is connected, this definition does not depend on the choice of $v$. For integer lattices,  it has been proved that $\lambda_c(\Z^d) $ is positive.  Interestingly,  the contact process on finite boxes $\llbracket 0,n \rrbracket^d$  exhibits a phase transition at the same critical value. More precisely, if we define the extinction time of the process $(\xi_t^{\bf 1})$ starting from full occupancy by
\begin{align*}
\tau_n = \inf \{t \geq 0: \xi^{\bf 1}_t= \varnothing\},
\end{align*}
then with high probability (w.h.p.) $\tau_n$ is of logarithmic order   when 
$\lambda<\lambda_c(\Z^d)$, and of exponential order when 
$\lambda>\lambda_c(\Z^d)$, see e.g. \cite{L} Section I.3.

\vspace{0.2 cm}
Recently, this phenomena has been observed in a few of other situations:  the random regular graphs  and  their limit  in the sense of the Benjamini--Schramm's local weak convergence of graphs \cite{BS} - the homogeneous tree, see  \cite{LS,MV};    the  finite homogeneous trees  and their limit - the canopy tree,  see  \cite{CMMV,MV,S};  the configuration models with heavy tail degree distributions and  their limit - the Galton-Watson tree, see  \cite{CD,CS,MMVY,MVY,P}; the preferential attachment graphs and their limit - the P\'olya-point random graph, see \cite{BBCS,C1}; random geometric graphs, see \cite{C2,MS}. 

\vspace{0.2 cm}
The goal of this paper is to explore the case of rank-one inhomogeneous graphs. An inhomogeneous random graph (IRG), $G_n=(V_n,E_n)$, is  defined as follows. Let $V_n= \{v_1,\ldots, v_n\}$ be the vertex set and let $(w_i)$ be a sequence of i.i.d. positive random variables with the same law as $w$. Then for any $1\leq i \neq j \leq n$, we independently draw an edge between $v_i$ and $v_j$ with probability 
$$p_{i,j}= 1 - \exp(- w_i w_j/ \ell_n),$$
where 
$$\ell_n = \sum_{i=1}^n w_i.$$
It is shown in  \cite[Vol. 2, Chapter 3]{H15} that when $\en(w)$ is finite,  $G_n$  converges weakly to a two-stages Galton-Watson tree. In this  tree,   the reproduction law of the root is  $(p_k)$ and the one of other vertices is $(g_k)$   with
\begin{align} \label{dpk}
 p_k = \pp(\poi(w)=k)= \en \left( e^{-w} \frac{w^k}{k!} \right) 
\end{align} 
and
\begin{align} \label{dgk}
g_k = \pp(\poi(w^*)=k)= \frac{1}{\en(w)}\en \left( e^{-w} \frac{w^{k+1}}{k!} \right),
\end{align}
where $w^*$ is the size-bias distribution of $w$.  We also assume in addition that 
\begin{itemize}
\item[$(H1)$] $w \geq 1$ a.s. and $\en(w) < \infty$, \\
\item[$(H2)$] the limiting tree is super critical, or equivalently
\begin{align*}
\nu=: \en(g)= \frac{\en(w^2)}{\en(w)}   >1,
\end{align*}
\item[$(H3)$] there exits a function $\varphi(k)$ increasing to infinity, such that
\begin{align*}
\limsup_{k \rightarrow \infty} g_k e^{k/ \varphi(k)} \geq 1.
\end{align*}
\end{itemize}
\begin{theo}\label{mt}
Let $\tau_n$ be the extinction time of the contact process on inhomogeneous random graphs with the weight $w$ satisfying the hypotheses $(H1)-(H3)$, starting from full occupancy. Then for any $\lambda>0$, there exist  positive constants $c$ and $C$, such that 
\begin{align*}
\pp(\exp (Cn) \geq \tau_n \geq \exp(cn)) \rightarrow 1 \quad \textrm{as } n\rightarrow \infty.
\end{align*} 
Moreover,
\begin{align*} 
\frac{\tau_n}{\en (\tau_n)}\quad  \mathop{\longrightarrow}^{(\kD)}_{n\to \infty} \quad  \kE(1),
\end{align*}
with $\kE(1)$ the exponential random variable with mean $1$.
\end{theo}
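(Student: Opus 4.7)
The theorem has two assertions: a two-sided exponential window for $\tau_n$ and an exponential distributional limit. I would follow the blueprint that is by now fairly standard for this type of result (Mountford, Mourrat, Valesin and Yao for configuration models, see \cite{MV,MVY}, and the adaptations in \cite{C1,CD}): first establish $\exp(cn)\le\tau_n\le\exp(Cn)$ with high probability, then derive the exponential law by a metastability / memorylessness argument.

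\textbf{The two extinction-time bounds.} The upper bound $\tau_n\le e^{Cn}$ is the easy half: under $(H1)$ the graph has $O(n)$ edges with high probability, and a crude comparison with a worst-case birth/death chain on $\{0,1\}^{V_n}$ delivers $\en\tau_n\le e^{Cn}$, whence Markov gives the claim. The lower bound is the core of the paper, and I would build it in four layers. \emph{(1) Star persistence:} on a star with $k$ leaves, starting with the centre infected, the infection survives for time $\exp(c\lambda^{2}k)$ with probability bounded below (classical). \emph{(2) Many good stars:} $(H3)$ produces $k_j\to\infty$ with $g_{k_j}\ge e^{-k_j/\varphi(k_j)}$, so that $G_n$ carries at least $n\,e^{-k_j/\varphi(k_j)}$ vertices of degree $\ge k_j$; choosing $k=k_j$ with $c\lambda^2 k\gg k/\varphi(k)$, the local star-survival time vastly exceeds the inverse density of such stars. \emph{(3) Inter-star communication:} using $(H2)$ and a BFS exploration of $G_n$, each good star lies within polylog distance of $\Theta(n)$ other good stars, and the infection propagates between adjacent good stars with probability still much larger than $\exp(-c\lambda^2 k)$. \emph{(4) Multi-scale renormalisation:} a block decomposition of $V_n$ into $\Theta(n)$ mesoscopic cells, each containing a good star and its communication paths, together with a renormalisation argument in the spirit of \cite{MVY,C1}, couples the process restricted to these blocks to a supercritical 1-dependent oriented percolation and yields $\tau_n\ge e^{cn}$.

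\textbf{Exponential law.} Once both bounds are in hand, the convergence $\tau_n/\en(\tau_n)\to\kE(1)$ will follow from Mountford's general metastability scheme (cf.\ Proposition 1.2 of \cite{MVY}). The key auxiliary statement is a coupling lemma: starting from any configuration with at least $\delta n$ infected sites, with probability $1-o(1)$ the process couples within time $o(\en\tau_n)$ with the one starting from full occupancy and survives beyond $\en\tau_n$. This gives the asymptotic memorylessness
\[
\pp(\tau_n>t+s)=\pp(\tau_n>s)\,\pp(\tau_n>t)+o(1)
\]
uniformly in $s\ge 0$ at scale $t=O(\en\tau_n)$, from which the exponential limit is immediate.

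\textbf{Main obstacle.} The hardest step is clearly \emph{(3)}: propagating infection between distant good stars in a rank-one inhomogeneous graph. In the configuration model, paths between hubs can be controlled via a size-biased exploration of the Galton--Watson limit, but here the inhomogeneous edge probabilities mean that such paths typically traverse low-weight vertices at which the infection dies almost instantly. The fix I envisage is to restrict to an \emph{effective} sub-graph consisting of vertices whose weight lies in a carefully chosen window (using $(H1)$ to control densities and $(H2)$ to preserve supercriticality of the associated exploration), and to compare the traversal time of infection along a path to a supercritical branching process. It is at this exact point that $(H1)$ and $(H2)$ must interact non-trivially, and I expect the bulk of the technical effort to concentrate there.
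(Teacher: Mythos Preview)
Your high-level architecture matches the paper's: the upper bound is indeed routine (the paper cites a general lemma from \cite{C2} together with $|E_n|\asymp n$), and the exponential-law part is handled by a metastability scheme essentially as you describe (Proposition~\ref{cvel} here, proved as in \cite{C2}). The gap is in your step~(3), and it is precisely the obstacle you flag --- but your proposed fix does not close it.

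The quantitative issue is the phrase ``polylog distance''. For step~(4) to yield $\tau_n\ge e^{cn}$ you need $\Theta(n)$ good stars, which forces the star size $K$ to be a constant (depending on $\lambda$, not on $n$). A star of size $K$ survives for time $e^{c\lambda^2 K}$ and pushes the infection across a path of length $d$ with success probability governed by $e^{c\lambda^2 K}(\lambda/(1+\lambda))^d$; if $d$ grows with $n$ this tends to $0$ and the oriented-percolation comparison fails. Letting $K\to\infty$ with $n$ does not help either: then the number of degree-$K$ vertices is only $n^{1-o(1)}$, not $\Theta(n)$, and you lose the linear exponent.

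What the paper does instead is construct a \emph{chain} of $\Theta(n)$ disjoint stars of size $K$ whose consecutive centres sit at distance at most $\psi_1(K)+1$, where $\psi_1(K)=[K/\sqrt{\varphi(3K)}]=o(K)$ is a constant in $n$. This is Proposition~\ref{pcm}, and it is the technical heart of the paper. The construction couples the neighbourhood of a vertex inside a large unexplored set with a marked mixed-Poisson Galton--Watson tree (Proposition~\ref{ppe}), shows that within depth $\psi_1(K)$ that tree contains a vertex of degree $\ge 3K$ with probability bounded away from $0$ (Lemma~\ref{lqk}; this is exactly where $(H3)$ enters, through $\bar\nu^{\,\psi_1(K)/2}g_{3K}\to\infty$), and then iterates: a short bootstrapping phase seeds the chain, and a second phase grows it to length $\Theta(n)$ by comparing the number of active seed sets to a right-biased random walk. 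Your step~(2) already contains the right heuristic --- the survival time $e^{c\lambda^2 k}$ beats the inverse density $e^{k/\varphi(k)}$ --- but the correct consequence is that the nearest good star lies at distance $O(k/\varphi(k))$, not polylog in $n$, and it is this constant-in-$n$ scale that makes the propagation estimate go through. Restricting to a weight window, as you suggest, is not what is used and would not obviously substitute for the chain construction.
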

For simplicity, we will replace the hypothesis $(H3)$ by the following stronger version: there exits a  function $\varphi(k)$ increasing to infinity, such that 
\begin{equation}
\tag{$H3'$}  g_k e^{k/ \varphi(k)} \geq 1 \quad \textrm{ for all } k \geq 1.
\end{equation}
We will see in Section 3 that this assumption does not change anything to the proof.

\vspace{0.2 cm}
 We note that $(H1)$ is necessary for the weak convergence,  $(H2)$ is essential since without it w.h.p. all  components have size  $o(n)$  and $(H3)$ is the key hypothesis in our proof.

 We also remark that  under $(H2)$ and  $(H3)$, Pemantle proves in \cite{P} that $\lambda_c(GW(g))$ is zero, with  $GW(g)$ the Galton-Watson tree with  reproduction law $g$.  
 
 It is worth noting that studying $\lambda_c(GW(g))$ when $g_k \asymp \exp(-c k)$ is still a challenge. An equivalently interesting problem is to study the extinction time of the contact process on a super-critical Erdos-R\'enyi graph, which is a special inhomogeneous graph and converges weakly to a Galton-Watson tree with  Poisson reproduction law.

\vspace{0.2 cm}
Let us make some comments on the proof of Theorem \ref{mt}. The upper bound on $\tau_n$ follows from a general result in \cite{C2}. To prove the lower bound, we will show that $G_n$ contains a sequence of  disjoint star graphs  with large degree, whose total size  is of order $n$. Moreover, the distance between two consecutive star graphs is not too large, so that  the virus starting from a star graph can infect the other one  with high probability.  Then by comparing with an oriented percolation with density close to $1$, we get the lower bound. The convergence in law can be  proved similarly as in \cite{C2}.   

\vspace{0.2 cm}
 The paper is organized as follows. In Section 2, we prove some preliminary results to describe the neighborhood of a vertex in the graph. In Section 3, by defining some  exploration process of the vertices  we prove the existence of the sequence of star graphs mentioned above. Then we prove our main theorem. In Section 4, we prove a similar result for the Erdos-R\'enyi graph: for any $\lambda >0$, if the mean degree of the Erdos-R\'enyi graph is larger than some explicit function of $\lambda$, then  the extinction time is also of  exponential order. 

\vspace{0.2 cm}
 Now we introduce some notation. We denote the indicator function of a set $E$ by ${\bf 1}(E)$.  
For any vertices $v$ and $w$ we write $v \sim w$ if there is an edge between them.  We call size of a graph $G$ the cardinality of its set of vertices, and we denote it by $|G|$.  A graph in which all vertices have degree one, except one which is connected to all the others is called a 
\textit{star graph}. The only vertex with degree larger than one is called the center of the star graph, or central vertex. 

\vspace{0.2cm}
Furthermore we denote by $\bin(n,p)$ the binomial distribution with parameters $n$ and $p$ and denote by $\poi(\mu)$ the Poisson distribution with mean $\mu$. Let $X$ and $Y$ be two random variables or two distributions, we write $X \preceq Y$ if $X$ is stochastically dominated by $Y$.  If $f $ and $g$ are two real functions, we write $f= \mathcal{O}(g)$ if there exists a constant $C>0,$ such that $f(x) \leq C g(x)$ for all $x ;$  $f \asymp g $ if $f= \mathcal{O}(g)$ and $g= \mathcal{O}(f);$  $f=o(g)$ if $f(x)/g(x) \rightarrow 0$ as $x \rightarrow \infty$. 
Finally  for a sequence of random variables $(X_n)$ and a function $f:\N \to (0,\infty)$, we say that $X_n \asymp f(n)$ holds w.h.p. if there exist positive constants $c$ and $C,$ such that $\pn(c f(n) \leq X_n \leq C f(n)) \rightarrow 1$, as $n\to \infty$.

\section{Preliminaries}
\subsection{A preliminary result on the sequence of weights.}
\begin{lem} \label{lx}
 Let $(w_i)$ be the sequence of i.i.d. weights as in the definition of IRGs.  Then for any $\delta >0$, there exists $\varkappa_1 = \varkappa_1(\delta)  \in (0,1)$, such that 
\begin{align*}
\pp \left( \sum_{i \in U} w_i \geq (1-\delta)\sum _{i=1}^n w_i  \, \textrm{ for all } \, U \subset \{1, \ldots, n\} \textrm{ with } |U| \geq n(1-\varkappa_1) \right) \rightarrow 1.
\end{align*} 
\end{lem}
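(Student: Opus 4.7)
The plan is to take complements: the stated inequality is equivalent to $\sum_{i \in U^c} w_i \leq \delta \sum_i w_i$ holding simultaneously for every $U^c \subset \{1,\ldots,n\}$ with $|U^c| \leq n\varkappa_1$. My first move will be to note that
$$\sup_{|U^c| \leq n \varkappa_1} \sum_{i \in U^c} w_i \;=\; \sum_{j=1}^{\lfloor n \varkappa_1 \rfloor} w_{(j)},$$
where $w_{(1)} \geq w_{(2)} \geq \cdots \geq w_{(n)}$ denote the order statistics. This turns a question about exponentially many subsets into one about a single random variable, thereby sidestepping a union bound over the $\binom{n}{n\varkappa_1}$ candidate sets $U^c$ that would demand much stronger tails than $(H1)$ provides.

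Next I will introduce a truncation level $M > 0$ and use the elementary deterministic bound
$$\sum_{j=1}^k w_{(j)} \;\leq\; \sum_{i=1}^n w_i \, {\bf 1}(w_i > M) \,+\, M k,$$
which is valid because at most $k$ of the top-$k$ weights can fall below $M$, each contributing at most $M$. Since $\en(w) < \infty$ by $(H1)$, dominated convergence lets me choose $M = M(\delta)$ large enough that $\en(w\,{\bf 1}(w > M)) < \delta \en(w)/6$.

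The weak law of large numbers, applied to $(w_i)$ and to $(w_i \, {\bf 1}(w_i > M))$, then gives with high probability
$$\sum_{i=1}^n w_i \geq \tfrac{1}{2} n \en(w), \qquad \sum_{i=1}^n w_i {\bf 1}(w_i > M) \leq \tfrac{3}{2} n \, \en(w \, {\bf 1}(w>M)) \leq \tfrac{\delta}{4} n \en(w).$$
Finally I will pick $\varkappa_1 \in (0,1)$ so small that $M \varkappa_1 \leq \delta \en(w)/4$; combining the two displays with $k = \lfloor n \varkappa_1 \rfloor$ yields, on the same high-probability event,
$$\sum_{j=1}^{\lfloor n \varkappa_1 \rfloor} w_{(j)} \;\leq\; \tfrac{\delta}{4} n \en(w) + \tfrac{\delta}{4} n \en(w) \;=\; \tfrac{\delta}{2} n \en(w) \;\leq\; \delta \sum_{i=1}^n w_i,$$
which is the desired uniform control over all admissible $U^c$.

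I do not anticipate any serious obstacle: the only subtle step is the initial reduction to a sum of order statistics, and everything afterwards is a standard truncation/LLN bookkeeping. The mild hypothesis $(H1)$ (namely $\en(w)<\infty$) is in fact exactly what makes the dominated-convergence step work; no moment beyond the first is needed.
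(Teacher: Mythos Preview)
Your proof is correct and follows essentially the same route as the paper: reduce the uniform statement over all $U$ to a bound on the sum of the largest $\lfloor \varkappa_1 n\rfloor$ order statistics, then control that sum by a truncation at a level chosen so that $\en(w\,{\bf 1}(w>M))$ is small, and invoke the law of large numbers on both $\sum w_i$ and $\sum w_i {\bf 1}(w_i>M)$. The only real difference is in packaging: the paper splits into the cases $w$ bounded versus unbounded and, in the unbounded case, uses the exact identity between $\{i:w_i>x\}$ and the top $|\{i:w_i>x\}|$ order statistics, whereas your deterministic inequality $\sum_{j\le k} w_{(j)} \le \sum_i w_i{\bf 1}(w_i>M) + Mk$ handles both cases uniformly and is arguably tidier.
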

\begin{proof} Using the law of large numbers, we get
\begin{align*}
\pp\left(\sum_{i=1}^n w_i = n(\mu + o(1)) \right) \rightarrow 1,
\end{align*}
with
$$\mu = \E(w).$$
Observe that the worst case of the sum $\sum_{i \in U} w_i$ is where $U$ contains the first $[n(1- \varkappa) n]$ order statistics. Hence, it is sufficient to show that for any $\delta >0$, there exists $\varkappa_1 \in (0,1)$, such that
\begin{align} \label{cd}
\pp \left( \frac{1}{n}\sum_{i = n- [\varkappa_1 n]}^ n  w_{(i)}  \geq   \mu \delta  \right) \longrightarrow 0,
\end{align}
where $w_{(1)} \leq w_{(2)} \leq \ldots \leq w_{(n)}$ is the order statistics of the sequence $(w_i)$. Clearly, if $w$ is  bounded (and thus the sequence $(w_i)$ is also), i.e. there exists a positive constant $K$, such that $\pp(w \leq K) = 1$, then \eqref{cd} easily follows by taking $\varkappa_1 = (\mu\delta)/(2K)$. We assume that $w$ is unbounded. Since $\E(w) \in (0, \infty)$, we can find a number $x \in (0, \infty)$, such that  
\begin{align*}
 \E(w1(w>x)) \leq   (\mu \delta)/2.
\end{align*}
Let us define 
$$\Lambda = \{i: X_i >x \}.$$
Since $w$ is unbounded, 
\begin{equation*}
\varkappa_1:= \frac{\pp(w >x)}{2} \in (0,1).
\end{equation*}
By law of large number, we have 
\begin{align}
 \frac{|\Lambda|}{n}  &\mathop{\longrightarrow}^{\textrm{a.s.}} 2\varkappa_1, \label{bb1} \\
  \frac{1}{n} \sum _{ i \in \Lambda} w_i  = \frac{1}{n} \sum_{i=1}^n w_i 1 (w_i >x) \label{bb2}&\mathop{\longrightarrow}^{\textrm{a.s.}} \E(w1(w>x )) \in (0, \mu \delta/2).
\end{align}
We observe that 
\begin{equation}
\sum \limits_{i \in \Lambda} w_{i} = \sum _{i = n- |\Lambda| +1}^ n w_{(i)}. \label{bb3}
\end{equation}
Now, we can deduce \eqref{cd}   from \eqref{bb1}, \eqref{bb2} and \eqref{bb3}. 
\end{proof}
\subsection{Coupling of an IRG with a Galton-Watson tree}
We describe here the  neighborhood of a vertex in a set.  Let $U \subset V_n$ and $v \in V_n \setminus U$, and let $R$ be a positive integer. We denote by $B_R(v,U)$ the graph containing all vertices in $U$ at distance less than or equal to $R$ from $v$. We adapt the construction  in  \cite[Vol. 2, Section 3.4]{H15} to make a coupling between $B_R(v,U)$ and a {\it marked mixed-Poisson Galton-Watson tree}. 

\vspace{0.3 cm}
Conditionally on the weights $(w_i)$, we define a random variable $M_U$ as follows 
\begin{align} \label{ma}
\pp(M_U=m) = w_m/ \ell_U \quad  \textrm{ for all indices $m$ such that } v_m \in U, 
\end{align} 
with $\ell_U= \sum w_i 1(v_i \in U)$. Note that $\ell_n = \ell_{V_n}$.

\vspace{0.2 cm}
We define a random tree with  root $o$ as follows. We first define the mark of the root as $M_{o} = m$, with $m$ the index such that $v_m=v$. Then  $o$ has $X_{o}$ children, with
$$X_{o} \sim \poi(w_{M_{o}} \ell_U/ \ell_n).$$
Each child of the root, say $x$, is assigned an independent mark  $M_x$, with the same distribution as $M_U$. Conditionally on $M_x$, the number of children of $x$  has  distribution  $\poi(w_{M_x} \ell_U/\ell_n)$. 

\vspace{0.2 cm}
Suppose that all the vertices at height smaller than or equal to $i$ are defined. We  determine the vertices at height $(i+1)$ as follows. Each vertex at height $i$, say $y$, has an independent mark $M_{y}$ with the same distribution as $M_U$ and it has $X_{y}$ children, where $X_{y}$ is a Poisson random variable with mean $w_{M_{y}} \ell_U/ \ell_n$. 

\vspace{0,2 cm}
We denote the resulted tree by  $\T(v,U)$ and call it the {\it marked mixed-Poisson  Galton-Watson tree} associated  to $(v,U)$. In order to make a relation between  $B_R(v,U)$ and $\T(v,U)$, we define a {\it thinning }  procedure on $\T(v,U)$ as follows.

\vspace{0.2 cm}
 For a vertex $y $ different from  the root, we thin $y$ when either one of the  vertices on the unique path between the root  and $y$ has been thinned, or when $M_{y}= M_{y'}$, for some  vertex $y'$ on this path. 

\vspace{0.2 cm} 
 We denote by $\tilde{\T}(v,U)$ the tree resulting  from the thinning on $\T(v,U)$. 
\begin{prop} \label{ppe} \cite[Vol. II, Proposition 3.10]{H15} Conditionally on $(w_i)$, the set of vertices in $U$ at distance $k$ from $v$ (considering the graph induced in $U \cup \{v\}$) has the same distribution as 
\begin{align*}
\left( \{v_{M_{x}}: x \in \tilde{\T}(v,U) \textrm{ and } |x| =k\} \right)_{k\geq 0},
\end{align*} 
with  $|x|$ the height of $x$.  Moreover,   $B_R(v,U)$ contains a subgraph which has the same law as    $\tilde{\T}_R(v,U)$- the graph containing all vertices in $\tilde{\T}(v,U)$ whose heights are smaller than or equal to $R$.  
\end{prop}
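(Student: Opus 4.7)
The plan is to build $G_n$ and $\T(v,U)$ on a common probability space via a Poisson coupling, and then read off a spanning subtree of $B_R(v,U)$ through a joint breadth-first exploration. I would proceed as follows.

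First, conditionally on the weights, for each unordered pair $\{i,j\}$ introduce independent Poisson variables $N_{ij}\sim \poi(w_iw_j/\ell_n)$. Since $\pp(N_{ij}\geq 1)=1-e^{-w_iw_j/\ell_n}=p_{i,j}$ and the $N_{ij}$ are independent across pairs, the graph with edge set $\{\{v_i,v_j\}:N_{ij}\geq 1\}$ has the law of $G_n$. It is convenient to think of $N_{ij}$ as a bundle of $N_{ij}$ parallel Poisson half-edges between $v_i$ and $v_j$, so that a $G_n$-edge is precisely a non-empty bundle.

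Next, perform a BFS in $U\cup\{v\}$ starting from $v$, revealing at each step all Poisson half-edges incident to the currently explored vertex. For a tree vertex with mark $m$, the total number of half-edges out of $v_m$ whose other end lies in $U$ is $\sum_{j:\,v_j\in U} N_{mj}\sim \poi(w_m\ell_U/\ell_n)$ by Poisson additivity, and Poisson coloring shows that, conditionally on this count, the endpoints are i.i.d.\ with $\pp(\mathrm{target}=v_j)=w_j/\ell_U$, i.e.\ distributed as $v_{M_U}$. This is exactly the offspring-plus-mark mechanism in the construction of $\T(v,U)$, so iterating the procedure from the root (whose mark is the index of $v$) couples the exploration with $\T(v,U)$. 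The subtlety is that the bundle $(N_{mj})_{v_j\in U}$ attached to a given index $m$ may be consulted only once; thinning removes precisely those branches on which this would fail, namely the branches along which a mark $M_y$ repeats some earlier mark $M_{y'}$ on the same root-to-$y$ path, and hence $\tilde{\T}(v,U)$ is exactly the part of $\T(v,U)$ consistently realized on top of the Poisson coupling.

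Both conclusions then follow by translation. An unthinned height-$k$ branch of $\tilde{\T}(v,U)$ is a sequence of distinct marks, hence a sequence of Poisson half-edges forming a self-avoiding path of length $k$ from $v$ to $v_{M_y}$ in $U\cup\{v\}$; conversely every such path comes from a unique unthinned branch. Reading off the endpoints level by level gives the first assertion, while keeping, for each graph vertex first reached by the BFS, only the half-edge through which it was first discovered yields a spanning tree of the explored component with the law of $\tilde{\T}_R(v,U)$ inside $B_R(v,U)$. The main obstacle is the filtration-level bookkeeping that guarantees the offspring at each tree vertex use only "fresh" Poisson bundles, so that their counts and marks really have the required mixed-Poisson and $M_U$ distributions; once this is in place, everything reduces to Poisson additivity, Poisson coloring, and the bijection between unthinned branches and self-avoiding paths.
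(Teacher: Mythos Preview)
The paper does not supply its own proof of this proposition: it cites \cite[Vol.~II, Proposition 3.10]{H15} and remarks that the extension from $U=V_n\setminus\{v\}$ to an arbitrary subset $U$ goes through verbatim. Your sketch---realising $G_n$ via independent Poisson bundles $N_{ij}\sim\poi(w_iw_j/\ell_n)$, then running a breadth-first exploration and using Poisson additivity and colouring to identify the offspring-plus-mark mechanism with that of $\T(v,U)$---is precisely the Norros--Reittu construction that underlies the proof in \cite{H15,NR}, so your approach coincides with the one the paper relies on.
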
 
We note that in \cite{H15}, the author only proves this proposition for $U=V\setminus \{v\}$. The proof for any subset of $V\setminus \{v\}$  is essentially the same, so we do not present  here.

\vspace{0.2 cm}
\noindent {\bf The law of the marked-mixed Poisson Galton-Watson tree}. The offspring distribution of the root  is given by
\begin{align*}
p^U_k = \pp(\poi(w_{M_o} \ell_U/\ell_n)=k) \quad \textrm{ for } \quad k \geq 0.
\end{align*} 
The individuals of the second and further generations have the same offspring distribution, denoted  by $(g^U_k)$. It is given as follows: for all $k\geq 0$
\begin{align*}
g^U_k= \pp(\poi(w_{M_U} \ell_U/\ell_n)=k)= \sum\limits_{v_i \in U} \pp(\poi(w_i \ell_U/ \ell_n)=k) w_i/ \ell_U.
\end{align*} 
If $U=V_n$, we write $g^{(n)}_k$ for $g^U_k$. Hence
\begin{align*}
g^{(n)}_k = \sum\limits_{i=1 } ^{n} \pp(\poi(w_i) =k) w_i/ \ell_n=  \pp(\poi(W_n^*)=k),
\end{align*}
with $W_n^*$  the size-bias distribution of the empirical mean weight $W_n= (w_1+ \ldots +w_n)/n$. 

\noindent It is shown in \cite{H15} that since the $(w_i)$ are i.i.d. with the same law as $w$ and $\E(w)$ is finite,
\begin{align*}
W_n \quad \mathop{\longrightarrow}^{(\kD)}_{n\to\infty} \quad w \quad \textrm{ and } \quad W_n^* \quad \mathop{\longrightarrow}^{(\kD)}_{n\to\infty} \quad w^*,
\end{align*} 
with $w^*$   the size-bias distribution of $w$. Therefore,  we have the following convergence.
\begin{lem} \label{gnk} \cite[Vol. II, Lemma 3.12]{H15}. For all $k\geq 0$,
\begin{align*}
\lim \limits_{n \rightarrow \infty} g^{(n)}_k = g_k,
\end{align*}
with $(g_k)$ as in \eqref{dgk}.
\end{lem}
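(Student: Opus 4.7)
The statement reduces to convergence of expectations of a specific bounded continuous function along the weak limit $W_n^* \to w^*$ already recalled in the paragraph preceding the lemma, so the proof should be essentially a one-liner once the definitions are unpacked.

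The plan is as follows. First, unpack both quantities by conditioning on the Poisson mean: using $\pp(\poi(\mu)=k)=e^{-\mu}\mu^k/k!$, rewrite
\[
g^{(n)}_k \;=\; \en\!\left[e^{-W_n^*}\frac{(W_n^*)^k}{k!}\right], \qquad g_k \;=\; \en\!\left[e^{-w^*}\frac{(w^*)^k}{k!}\right].
\]
Second, observe that the map $f_k(x)=e^{-x}x^k/k!$ is continuous on $[0,\infty)$ and uniformly bounded (by $k^k e^{-k}/k!\le 1$, or trivially by $1$ when $k=0$). Third, invoke the Portmanteau theorem: since $W_n^*\to w^*$ in distribution and $f_k$ is bounded continuous, $\en[f_k(W_n^*)]\to\en[f_k(w^*)]$, which is exactly $g^{(n)}_k\to g_k$.

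There is no genuine obstacle here. The only verification really needed is the uniform boundedness of $f_k$, which lets Portmanteau apply without any integrability hypothesis beyond $\en(w)<\infty$. An equivalent elementary route bypasses the stated weak convergence entirely: using the explicit formula $g^{(n)}_k=\ell_n^{-1}\sum_{i=1}^n e^{-w_i}w_i^{k+1}/k!$, one applies SLLN separately to the numerator $\frac{1}{n}\sum_i e^{-w_i}w_i^{k+1}/k!$ (integrable because $x^{k+1}e^{-x}$ is bounded on $[0,\infty)$) and to the denominator $\ell_n/n\to \en(w)$, and takes the ratio; this variant in fact yields \emph{almost sure} convergence of the random quantities $g^{(n)}_k$ to the deterministic limit $g_k$, which is more than what the lemma asks for.
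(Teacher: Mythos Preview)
Your proposal is correct and matches the reasoning the paper itself indicates: the paper does not actually supply a proof of this lemma but cites \cite[Vol.~II, Lemma 3.12]{H15}, preceding the statement with the weak convergence $W_n^*\to w^*$ and the word ``Therefore''. Your Portmanteau argument (with $f_k$ bounded continuous) is precisely the missing one-liner behind that ``Therefore'', and your SLLN alternative is a valid self-contained route yielding the stronger almost sure convergence.
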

Using Lemmas \ref{lx} and \ref{gnk}, we will show that the distribution $(g^U_k)$ approximates $(g_k)$, provided $|U|$ is large enough. 
\begin{lem} \label{lgak} For any $\varepsilon >0$ and $K \in \mathbb{N}$, there exists a constant $ \varkappa_2=\varkappa_2(\varepsilon,K)  \in (0, \varkappa_1(1/2))$, such that 
\begin{align*}
\pp \left( g^U_k \geq (1- \varepsilon)g_k \,\, \textrm{ for all } 0 \leq k \leq K \textrm{ and } U \subset V_n \textrm{ with } |U| \geq (1-\varkappa_2) n \right) \rightarrow 1.
\end{align*}
\end{lem}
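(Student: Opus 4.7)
The strategy is to compare $g^U_k$ with $g^{(n)}_k$, which by Lemma \ref{gnk} tends to $g_k$, and to use Lemma \ref{lx} to control the ratio $\alpha_U := \ell_U/\ell_n$. The point is that when $|U| \geq (1-\varkappa_2)n$ with $\varkappa_2$ chosen sufficiently small, Lemma \ref{lx} will guarantee $\alpha_U \geq 1-\delta$ uniformly in $U$ with high probability, for any prescribed $\delta$.

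The central identity is the following rewriting of the Poisson mass function valid for $\alpha\in(0,1]$:
\begin{equation*}
\pp(\poi(w_i\alpha)=k) \;=\; \alpha^k\, e^{w_i(1-\alpha)}\, \pp(\poi(w_i)=k) \;\geq\; \alpha^k\, \pp(\poi(w_i)=k),
\end{equation*}
since $w_i>0$ and $1-\alpha\geq 0$. Plugging this (with $\alpha=\alpha_U$) into the definition of $g^U_k$ and using $\ell_U=\alpha_U\ell_n$ yields
\begin{equation*}
g^U_k \;\geq\; \frac{\alpha_U^{k-1}}{\ell_n}\sum_{v_i\in U}\pp(\poi(w_i)=k)\,w_i.
\end{equation*}
Because $\pp(\poi(w_i)=k)\leq 1$, the indices outside $U$ contribute at most $\ell_n-\ell_U$, so
\begin{equation*}
g^U_k \;\geq\; \alpha_U^{k-1}\Bigl(g^{(n)}_k-(1-\alpha_U)\Bigr).
\end{equation*}
For $k=0$ the prefactor is $\alpha_U^{-1}\geq 1$ which only strengthens the bound, so no separate case analysis is needed.

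Given $\varepsilon>0$ and $K\in\N$, I would choose $\delta=\delta(\varepsilon,K)\in(0,1/2)$ small enough that $(1-\delta)^{K-1}\geq 1-\varepsilon/2$ and $\delta\leq(\varepsilon/4)\min\{g_k:0\leq k\leq K,\,g_k>0\}$, with the convention that indices $k$ with $g_k=0$ give a trivially valid inequality. Applying Lemma \ref{lx} with this $\delta$ furnishes $\varkappa_1(\delta)$; setting $\varkappa_2:=\min(\varkappa_1(\delta),\varkappa_1(1/2)/2)$ places $\varkappa_2$ in $(0,\varkappa_1(1/2))$ and preserves the uniform control $\alpha_U\geq 1-\delta$ for all admissible $U$ with probability tending to $1$. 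Simultaneously, by Lemma \ref{gnk}, $g^{(n)}_k\geq g_k-\delta$ for every $k\leq K$ with probability tending to $1$ (only finitely many $k$ are involved). On the intersection of these events, for any admissible $U$ and any $k\leq K$ with $g_k>0$,
\begin{equation*}
g^U_k \;\geq\; (1-\delta)^{K-1}(g_k-2\delta) \;\geq\; (1-\varepsilon/2)\bigl(g_k-(\varepsilon/2)g_k\bigr) \;\geq\; (1-\varepsilon)g_k,
\end{equation*}
as required.

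The proof is essentially a careful accounting exercise; the step requiring the most vigilance is the simultaneous coordination of three small quantities: the multiplicative distortion $\alpha_U^{k-1}$, the additive loss $1-\alpha_U$ from restricting to $U$, and the $n$-dependent error $g^{(n)}_k-g_k$. Boundedness of $k$ by $K$ turns the first into a single finite constant, and Lemma \ref{lx} provides the needed uniformity in $U$, so the only real input is the Poisson identity displayed above.
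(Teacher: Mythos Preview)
Your proof is correct and takes a genuinely different, more economical route than the paper's. The paper bounds the two-sided discrepancy $|g^U_k - g^{(n)}_k|$ by writing it as $S_1+S_2+S_3$ and controlling $S_1$ via the mean value theorem applied to $f_k(x)=e^{-x}x^k/k!$; this requires a small calculus digression to show $|f_k'(u)|\le 1(x\le 2k)+f_k(x)$ for $u\ge x$, and then a law-of-large-numbers argument to handle the resulting sums. You instead exploit the exact multiplicative identity $\pp(\poi(\alpha w)=k)=\alpha^k e^{w(1-\alpha)}\pp(\poi(w)=k)$, which immediately gives the one-sided inequality the lemma actually needs and bypasses all the derivative estimates. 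The paper's approach yields the stronger conclusion $|g^U_k-g^{(n)}_k|=\kO(k(\ell_n-\ell_U)/\ell_U)$, but since only the lower bound is ever used downstream, your shortcut loses nothing. One cosmetic remark: your uniform prefactor bound $\alpha_U^{k-1}\ge(1-\delta)^{K-1}$ tacitly assumes $K\ge 1$ (so that $(1-\delta)^{K-1}\le 1\le\alpha_U^{-1}$ covers the $k=0$ case); this is harmless since $K=0$ is trivial.
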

\begin{proof}
If $g_k=0$, then $g_k^U \geq (1-\varepsilon) g_k$. Assume that $k \leq K$ and  $g_k>0$. We have
\begin{align*}
|g_k^U-g_k| \leq |g_k^U -g_k^{(n)}|+|g_k^{(n)}-g_k|.
\end{align*}
Lemma  \ref{gnk} implies that for all $n$ large enough
\begin{align} \label{gknk}
|g_k^{(n)}-g_k| \leq \varepsilon g_k/2.
\end{align}
On the other hand, 
\begin{eqnarray*} 
|g_k^U - g_k^{(n)}| & = & \,  \vline \, \sum_{v_i \in U} \pp\left( \poi(w_i \ell_U/\ell_n)=k\right) \frac{w_i}{\ell_U} - \sum_{v_i \in V_n} \pp\left( \poi(w_i )=k\right) \frac{w_i}{\ell_n} \, \vline  \\
 &\leq & \sum_{v_i \in U} | \pp\left( \poi(w_i \ell_U/\ell_n)=k\right) \frac{w_i}{\ell_U} -  \pp\left( \poi(w_i )=k\right) \frac{w_i}{\ell_n} |  + \sum_{v_i \not \in U} \frac{w_i}{\ell_n}  \\
& \leq & \sum_{v_i \in U} |  \pp\left( \poi(w_i \ell_U/\ell_n)=k\right)  -  \pp\left( \poi(w_i )=k\right) | \frac{w_i}{\ell_U}  \\
& & +   \sum_{v_i \in U}   \pp\left( \poi(w_i )=k\right) \left( \frac{w_i}{\ell_U} - \frac{w_i}{\ell_n} \right)   + \sum_{v_i \not \in U} \frac{w_i}{\ell_n} \\
&=& S_1 + S_2 +S_3.
\end{eqnarray*}
Here, we have used that $|x_i y_i -a_i b_i| \leq |x_i-a_i|y_i + |y_i-b_i|a_i$ for all $x_i,y_i,a_i,b_i \geq 0$. 

We now define 
\begin{align*}
f_k(x)= \pp(\poi(x)=k) =  \frac{e^{-x}x^k}{k!}.
\end{align*}
By the mean value theorem, for any $x<y$ 
\begin{align*}
|f_k(x)-f_k(y)| \leq \max_{x \leq u  \leq y } |f_k'(u)| |x-y|.
\end{align*}
If $k=0$, then 
\begin{align*}
|f_0(x)-f_0(y)| \leq e^{-x} (y-x).
\end{align*}
If $k\geq 1$ then for $u >0$,
\begin{eqnarray*}
f_k'(u)& =& e^{-u} (k-u)\frac{u^{k-1}}{k!}, \\ 
 f_k''(u)& =  &e^{-u} (u^2-2ku +k(k-1)) \frac{u^{k-2}}{k!}.  
\end{eqnarray*}
Thus  $f_k'(u) \leq 0 \leq f_k''(u)$ for all $u \geq 2k$.  Hence, for  $2k \leq x \leq u$, 
 $$|f'_k(u)| \leq |f_k'(x)| \leq f_k(x ).  $$
 On the other hand, $|f'_k(u)| \leq 1$ for all $u \geq 0$. Therefore,
 \begin{align*}
 \max_{x \leq u \leq y} |f_k'(u)| \leq 1(x \leq 2k) + f_k(x). 
 \end{align*}
 In summary, for all $k$ and $0 \leq x \leq y$
 \begin{align} \label{fxy}
 |f_k(x)-f_k(y)| \leq (1(x \leq 2k)+ f_k(x))(y-x).
 \end{align}
 Applying \eqref{fxy}, we get that if $\ell_U \geq \ell_n/2$ then 
 \begin{eqnarray*}
 |f_k(w_i \ell_U/\ell_n)-f_k(w_i)| &\leq  & \left( 1(w_i \ell_U / \ell_n \leq 2k) + f_k(w_i \ell_U/\ell_n) \right) \frac{w_i (\ell_n - \ell_U)}{\ell_n} \\
 & \leq & (  1(w_i \leq 4k) + e^{-w_i/2} w_i^k/k!) \frac{w_i (\ell_n - \ell_U)}{\ell_n}. 
 \end{eqnarray*}
 Therefore,
 \begin{eqnarray*}
 S_1 & = & \sum _{v_i \in U} |f_k(w_i \ell_U/\ell_n)-f_k(w_i)| \frac{w_i}{\ell_U} \\
 & \leq & \sum _{i =1}^n \frac{w_i^2 (\ell_n - \ell_U)}{\ell_U \ell_n} 1(w_i \leq 4k) + \sum _{i =1}^n \frac{e^{-w_i/2} w_i^{k+2}}{k!} \frac{(\ell_n - \ell_U)}{\ell_U \ell_n} \\
& \leq & \frac{4k(\ell_n - \ell_U)}{\ell_U } + \left( \frac{1}{\ell_n} \sum _{i =1}^n \frac{e^{-w_i/2} w_i^{k+2}}{k!} \right) \left(\frac{\ell_n - \ell_U}{\ell_U } \right).
 \end{eqnarray*}
Observe that 
\begin{align*}
 \frac{1}{n} \sum _{i =1}^n \frac{e^{-w_i/2} w_i^{k+2}}{k!} \quad \rightarrow \quad\E(e^{-w/2}w^{k+2}/k!) < \infty. 
\end{align*}
On the other hand,  $\ell_n \asymp n$. Therefore, 
\begin{align*}
 \frac{1}{\ell_n} \sum _{i =1}^n \frac{e^{-w_i/2} w_i^{k+2}}{k!} = \kO(1).
\end{align*}
Hence
\begin{align*}
S_1 = \kO \left(\frac{k(\ell_n- \ell_U)}{\ell_U} \right).
\end{align*}
Moreover,
\begin{eqnarray*}
S_2 & = &\sum_{v_i \in U} \pp(\poi(w_i)=k) |\frac{w_i}{\ell_U}- \frac{w_i}{\ell_n}|  \\
& \leq &\sum_{i=1}^n \frac{w_i(\ell_n -\ell_U)}{\ell_U\ell_n}  = \frac{\ell_n -\ell_U}{\ell_U},
\end{eqnarray*}
and
\begin{align*}
S_3 = \sum_{v_i \not \in U} \frac{w_i}{\ell_n} \leq \frac{\ell_n -\ell_U}{\ell_U}.
\end{align*}
In conclusion, if $\ell_U \geq \ell_n/2$ and $\ell_n \asymp n$ then
\begin{align*}
|g_k^U-g_k^{(n)}| \leq S_1 +S_2 +S_3 = \kO \left(\frac{k(\ell_n- \ell_U)}{\ell_U} \right).
\end{align*}
Therefore, there exists $\delta_k = \delta_k(k, \varepsilon, g_k)>0$, such that if  $\ell_U \geq (1- \delta_k)\ell_n$ then
\begin{align} \label{gkan}
|g_k^U-g_k^{(n)}| \leq \varepsilon g_k/2.
\end{align}
Define $\delta = \min \{ \delta_k : k\leq K \textrm{ and } g_k >0\}$. Then we have $\delta >0$.  Now, by Lemma \ref{lx} there exists $\varkappa_2 = \varkappa_1(\delta) \wedge \varkappa_1(1/2) \in (0,1)$, such that  w.h.p. for all $U \subset V_n$ with $|U| \geq (1 - \varkappa_2)n$,
$$\ell_U \geq (1- \delta) \ell_n.$$
Thus by \eqref{gknk} and \eqref{gkan}, w.h.p. for all $U \subset V_n$ with $|U| \geq (1 - \varkappa_2)n$,
\begin{align*}
g_k^U \geq (1- \varepsilon) g_k  \,\, \textrm{ for all } k \leq K,
\end{align*}
which proves the result.
\end{proof}
For any $\varepsilon \in (0,1)$ and $K \in \mathbb{N}$, we define a distribution $(g^{\varepsilon,K}_.)$ as follow:
\begin{align*}
g^{\varepsilon,K}_k &= 0 \hspace{3cm} \textrm{ if } \quad k\geq K+1,\\
g^{\varepsilon,K}_k &= (1- \varepsilon) g_k \hspace{1.6 cm} \textrm{ if } \quad 1 \leq k\leq K,\\
g^{\varepsilon,K}_0 &= 1- (1- \varepsilon) \sum_{k=1}^K g_k.
 \end{align*}
The following result is  a direct consequence of Lemma \ref{lgak}. 
\begin{lem} \label{gea}
For any $\varepsilon >0$ and $K \in \mathbb{N}$,
\begin{align*}
\pp \left(  (g^{\varepsilon,K}_.) \preceq (g^U_.)  \, \textrm{ for all $U \subset V_n$ with } |U| \geq (1- \varkappa_2 )n\right) \rightarrow 1,
\end{align*}
with $\varkappa_2$ as in Lemma \ref{lgak}.
\end{lem}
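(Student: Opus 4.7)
The plan is to reduce the stochastic domination to a tail inequality and then invoke Lemma \ref{lgak} on the event of probability tending to $1$. Recall that for two probability distributions $\mu_1, \mu_2$ on $\mathbb{N}$, the relation $\mu_1 \preceq \mu_2$ is equivalent to the pointwise inequality of tail functions: $\sum_{k \geq j} \mu_1(k) \leq \sum_{k \geq j} \mu_2(k)$ for every $j \geq 0$. So it suffices to verify this tail inequality for $(g^{\varepsilon,K}_\cdot)$ and $(g^U_\cdot)$ on the high-probability event provided by Lemma \ref{lgak}.

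First, I would condition on the event
\begin{align*}
\kA_n = \left\{ g^U_k \geq (1-\varepsilon) g_k \text{ for all } 0 \leq k \leq K \text{ and all } U \subset V_n \text{ with } |U| \geq (1-\varkappa_2)n \right\},
\end{align*}
which satisfies $\pp(\kA_n) \to 1$ by Lemma \ref{lgak}. On $\kA_n$, I would check the tail inequality $\sum_{k \geq j} g^{\varepsilon,K}_k \leq \sum_{k \geq j} g^U_k$ by splitting into three trivial cases. For $j = 0$ both tails equal $1$; for $j \geq K+1$ the left-hand tail is $0$ by construction of $g^{\varepsilon,K}$; and for $1 \leq j \leq K$, using $g^{\varepsilon,K}_k = (1-\varepsilon) g_k \leq g^U_k$ for $1 \leq k \leq K$ and the fact that $g^{\varepsilon,K}_k = 0$ for $k > K$, we get
\begin{align*}
\sum_{k \geq j} g^{\varepsilon,K}_k = \sum_{k=j}^{K} (1-\varepsilon) g_k \leq \sum_{k=j}^{K} g^U_k \leq \sum_{k \geq j} g^U_k.
\end{align*}

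There is essentially no obstacle here: the whole point of the definition of $(g^{\varepsilon,K}_\cdot)$ is that it assigns extra mass to $0$ (the value $g^{\varepsilon,K}_0 = 1 - (1-\varepsilon)\sum_{k=1}^{K} g_k$ absorbs the deficit) so that the comparison with $(g^U_\cdot)$ reduces to the pointwise bounds for $k = 1, \ldots, K$ given by Lemma \ref{lgak}. Once the tail inequality is established on $\kA_n$, stochastic domination follows on $\kA_n$, and since $\pp(\kA_n) \to 1$ the statement of the lemma is proved. The only subtlety worth a line in the write-up is checking the tail inequality at $j = 1$, which amounts to $(1-\varepsilon)\sum_{k=1}^{K} g_k \leq 1 - g^U_0$; this follows from $g^U_0 \leq 1 - \sum_{k=1}^{K} g^U_k \leq 1 - (1-\varepsilon)\sum_{k=1}^{K} g_k$ on $\kA_n$.
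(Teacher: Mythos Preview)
Your proof is correct and is precisely the verification the paper has in mind: the paper simply states that Lemma~\ref{gea} is a direct consequence of Lemma~\ref{lgak} without writing out the details, and your tail-inequality check on the event $\kA_n$ is exactly how that consequence is made explicit. The last paragraph about $j=1$ is already subsumed in your general case $1\le j\le K$, so it can be omitted.
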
 
Observe that $(g^{\varepsilon,K}_.)$ stochastically increases (resp. decreases) in $K$ (resp. $\varepsilon$). Moreover, it converges to $(g_.)$ as $\varepsilon \rightarrow 0$ and $K \rightarrow \infty$. Therefore, by the hypothesis $(H_2)$, there are positive constants $\varepsilon_0$ and $K_0$, such that for all $\varepsilon \leq \varepsilon_0$ and $K \geq K_0$,
\begin{align} \label{nuek}
\nu_{\varepsilon,K}:=\sum_{k=0}^{\infty} k g^{(\varepsilon,K)}_k \geq \bar{\nu},
\end{align} 
where 
\begin{align*}
\bar{\nu}= \frac{1+ \nu}{2} \in (1, \nu).
\end{align*}
Define for $K\geq K_0$,
\begin{align*} 
 \kE(K)= \{(g^{\varepsilon_0,K}_.) \preceq (g^U_.) \textrm{ and } \ell_U \geq \ell_n/2 \, \textrm{ for all $U \subset V_n$ with } |U| \geq (1- \varkappa_2)n\},
\end{align*}
with $\varkappa_2=\varkappa_2(\varepsilon_0,K)$ as in Proposition \ref{gea}. Using this proposition and Lemma \ref{lx} with the fact that $\varkappa_2 \leq \varkappa_1(1/2)$,  we obtain 
\begin{align} \label{kek}
\lim_{n\rightarrow \infty}\pn(\kE(K)) \rightarrow 1.
\end{align} 
We call $T_{\varepsilon,K}$ the Galton-Watson tree with  reproduction law $(g^{\varepsilon,K}_.)$. Then \eqref{nuek} implies that $T_{\varepsilon,K}$ is super critical when $\varepsilon \leq \varepsilon_0$ and $K\geq K_0$. From now on, we set 
$$\psi_1(K)=[K/\sqrt{\varphi(3K)}],$$
with the function $\varphi$ as in the hypothesis $(H3')$.
 
We  prove here a key lemma saying that when $U$ is large enough, with positive probability  there exists a  vertex in $\tilde{\T}(v,U)$ at distance less than   $\psi_1(K)$ from the root having more than $3K$ children (which implies that there exists a vertex with degree larger than $3K$ in $B_{\psi_1(K)}(v,U)$ with positive probability).   
\begin{lem} \label{lqk}
There are positive constants $\theta_1$ and $K_1$, such that for all $K\geq K_1$ and $U \subset V_n$ with $|U| \geq (1-\varkappa_2)n$ and $n$ large enough,
\begin{align*} 
  \pn \left( \exists \, x \in \tilde{\T}(v,U):    |x| \leq   \psi_1(K),   \deg(x) \geq  3K+1 \mid \kE(K)\right) \geq \theta_1,
\end{align*}
with $\varkappa_2$ as in Lemma \ref{gea}.
\end{lem}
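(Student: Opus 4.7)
My plan combines two ingredients: (i) on $\kE(K)$ the tree $\tilde{\T}(v,U)$ contains a super-critical Galton-Watson subtree that survives to height $\psi_1(K)-1$ with positive probability and, on that event, produces of order $\bar{\nu}^{\psi_1(K)-1}$ vertices at that level; (ii) each vertex in $\T(v,U)$ has probability at least $\gtrsim e^{-3K/\varphi(3K)}$ of producing $\geq 3K+1$ children, the bound coming from $(H3')$. The specific choice $\psi_1(K)=[K/\sqrt{\varphi(3K)}]$ is designed so that the product of the two effects tends to $+\infty$ as $K\to\infty$.

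\textbf{Step 1 (growth of a dominated subtree).} On $\kE(K)$, Lemma \ref{gea} gives $(g^{\varepsilon_0,K}_\cdot)\preceq (g^U_\cdot)$, so I couple the construction of $\T(v,U)$ with that of a Galton-Watson tree $T_{\varepsilon_0,K}$ of reproduction law $(g^{\varepsilon_0,K}_\cdot)$ via the monotone inverse-CDF coupling at each vertex; in this coupling $T_{\varepsilon_0,K}$ sits inside $\T(v,U)$. Because $\nu_{\varepsilon_0,K}\geq\bar{\nu}>1$, a standard Kesten-Stigum argument furnishes $q_1,c_1>0$ depending only on $\bar{\nu}$ (hence only on $K_0$) such that
\begin{equation*}
\pp\bigl(\#\{y\in T_{\varepsilon_0,K}:|y|=\psi_1(K)-1\}\geq N\bigr)\geq q_1,\qquad N:=c_1\bar{\nu}^{\psi_1(K)-1}.
\end{equation*}

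\textbf{Step 2 (large offspring -- the crux).} Conditionally on those $N$ vertices, their children counts in $\T(v,U)$ are i.i.d.\ with law $(g^U_\cdot)$. Using the explicit representation $g^U_k=\sum_{v_i\in U}(w_i/\ell_U)f_k(w_i\ell_U/\ell_n)$ with $f_k(x)=e^{-x}x^k/k!$, together with $\ell_U\geq\ell_n/2$ (valid on $\kE(K)$) and a LLN-style comparison with $g_{3K}=\en(w\,f_{3K}(w))/\en(w)$, one can establish
\begin{equation*}
p_K\;:=\;\sum_{k\geq 3K}g^U_k\;\geq\;c_2\,g_{3K}\;\geq\;c_2\,e^{-3K/\varphi(3K)},
\end{equation*}
the last bound being $(H3')$. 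By independence, the probability that none of the $N$ vertices produces $\geq 3K$ children in $\T(v,U)$ is at most $(1-p_K)^N\leq \exp(-Np_K)$.

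\textbf{Step 3 (choice of $\psi_1$ and conclusion).} With $\psi_1(K)=[K/\sqrt{\varphi(3K)}]$,
\begin{equation*}
Np_K\;\geq\;c_1c_2\exp\!\Bigl(\frac{K\log\bar{\nu}}{\sqrt{\varphi(3K)}}-\frac{3K}{\varphi(3K)}-\log\bar{\nu}\Bigr)\;\xrightarrow[K\to\infty]{}\;+\infty,
\end{equation*}
because $\log\bar{\nu}\gg 3/\sqrt{\varphi(3K)}$ for all $K$ large enough. Hence there is $K_1$ such that for $K\geq K_1$, $\exp(-Np_K)\leq 1/2$, and with probability at least $q_1/2$ one finds a vertex of $\T(v,U)$ at height $\leq\psi_1(K)$ with degree $\geq 3K+1$. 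Finally, the thinning that produces $\tilde{\T}(v,U)$ removes a vertex of height $h$ with probability at most $(h+1)\max_{i\leq n}w_i/\ell_U$, which is $o(1)$ uniformly for $h\leq\psi_1(K)$ since $\en(w)<\infty$ forces $\max_{i\leq n}w_i=o(n)$ a.s.\ and $\ell_U\asymp n$. Hence the target vertex survives the thinning with probability close to $1$, and taking $\theta_1:=q_1/3$ finishes the proof.

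\textbf{Main obstacle.} Step 2 is the delicate point. The event $\kE(K)$ only encodes stochastic dominance by the \emph{truncated} law $(g^{\varepsilon_0,K}_\cdot)$ and so gives no direct information on $g^U_k$ for $k>K$. Proving $g^U_{3K}\gtrsim g_{3K}$ really requires unwinding the Poisson representation and showing that the replacement $w_i\mapsto w_i\ell_U/\ell_n$ in the argument of $f_{3K}$ costs only a multiplicative constant, exploiting both $\ell_U\geq \ell_n/2$ and the empirical concentration of the $(w_i)$; once this is in hand, the Galton-Watson computation is routine.
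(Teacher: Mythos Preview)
Your argument follows the same route as the paper's: embed a super-critical Galton--Watson subtree in $\T(v,U)$ via the domination $(g^{\varepsilon_0,K}_\cdot)\preceq(g^U_\cdot)$, obtain of order $\bar\nu^{\psi_1(K)}$ vertices at height $\psi_1(K)-1$, use $(H3')$ to show one of them has $\geq 3K$ children, and argue that thinning is negligible. The only notable differences are that the paper first case-splits on whether $w_v\geq 10K$ (so that the root's distinct offspring law is handled separately and the thinning step reduces to the unconditional statement $\T_{\psi_1(K)}(v,U)\equiv\tilde\T_{\psi_1(K)}(v,U)$ w.h.p.\ for bounded $w_v$), uses the cruder growth bound $Z_\ell\geq\bar\nu^{\ell/2}$ in place of Kesten--Stigum, and in fact glosses over your ``main obstacle'' by writing $g_{3K}$ directly in the binomial without justifying $g^U_{3K}\gtrsim g_{3K}$---so on that point you have been more careful than the paper.
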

\begin{proof} If $w_v$ -the weight of $v$- is larger  than $10K$, then  $\deg(o)$ is larger than $3K$ with positive probability.  Indeed, $\deg(o)$ is a Poisson random variable with parameter $w_v \ell_U/\ell_n$. Moreover, on $\kE(K)$, we have $\ell_U \geq \ell_n/2$.  Therefore,
\begin{eqnarray*}
\pn(\deg(o) \geq 3K \mid \kE(K)) &=& \pn(\poi(w_v \ell_U/\ell_n) \geq 3K \mid \kE(K)) \\
  &\geq& \pn(\poi(5K) \geq 3K) >0. 
\end{eqnarray*}
Hence, the result follows.  We now suppose that $w_v \leq  10K$.  Then,  in the proof of \cite[Vol. II, Corollary 3.13]{H15},  it is shown that for any $\ell$ 
\begin{align} \label{zvu}
\pn(\T_{\ell}(v,U) \equiv \tilde{\T}_{\ell}(v,U)) \rightarrow 1 \quad \textrm{as} \quad n\rightarrow \infty.
\end{align}
We denote by  
\begin{eqnarray*}
Z^{v,U}_{\ell} &=&  |\{x \in \T(v,U), |x| = \ell\}|. 
\end{eqnarray*}
Then by Lemma \ref{gea} conditionally on $\deg(o) \geq 1$,
\begin{align} \label{zdz}
Z^{v,U}_{\ell+1}  \succeq Z^{\varepsilon_0,K}_{\ell},
\end{align}
with $Z^{\varepsilon_0,K}_{\ell}$ the number of individuals at the  $\ell^{th}$ generation of $T_{\varepsilon_0,K}$.  We remark that
\begin{align*}
\mathbb{P}(Z_{\ell} \geq m^{\ell/2} \mid Z_{\ell} \geq 1 ) \rightarrow 1 \quad \textrm{ as } \ell \rightarrow \infty,
\end{align*}
where $Z_{\ell}$ is the number of individuals at the  $\ell^{th}$ generation of a Galton-Watson tree $T$ with mean $m>1$.  Therefore, for all $\ell$ large enough
\begin{align*} 
\mathbb{P}(Z_{\ell} \geq m^{\ell/2}) \geq \mathbb{P}(|T|=\infty)/2.
\end{align*}
Hence, for $K \geq K_0$
\begin{align} \label{zek}
\mathbb{P}(Z^{\varepsilon_0,K}_{\ell} \geq \bar{\nu}^{\ell/2}) \geq  \mathbb{P}(Z^{\varepsilon_0,K_0}_{\ell} \geq \bar{\nu}^{\ell/2})  \geq \mathbb{P}(|T_{\varepsilon_0,K_0}| = \infty)/2.
\end{align}
It follows from \eqref{zdz} and \eqref{zek} that  for all $\ell$ large enough,
 \begin{eqnarray} \label{zvul}
 \pn(Z^{v,U}_{\ell+1} \geq \bar{\nu}^{\ell/2} \mid \kE(K)) & \geq & \pn(\deg(o) \geq 1 \mid \kE(K))  \times \mathbb{P}(|T_{\varepsilon_0,K_0}| = \infty)/2 \notag \\
 &\geq & (1- e^{-w_v/2}) \times \mathbb{P}(|T_{\varepsilon_0,K_0}| = \infty)/2  \notag \\
 &\geq & \mathbb{P}(|T_{\varepsilon_0,K_0}| = \infty)/8.
 \end{eqnarray}
 Here, we have used that $w_v \geq 1$. 
Now, we set $$\ell = \psi_1(K)-1.$$
Then we have 
\begin{eqnarray} \label{tkr}
& &\pn \left(\exists \, x \in \T(v,U):  |x| = \ell+1, \deg(x) \geq 3K+1 \mid Z^{v,U}_{\ell+1} \geq \bar{\nu}^{\ell/2} \right) \\
 & \geq &  \pn\left(\bin\left([\bar{\nu}^{\ell/2}], g_{3K} \right) \geq 1 \right) \rightarrow 1 \qquad \textrm{ as } \qquad K \rightarrow \infty,  \notag
\end{eqnarray}
since under $(H3')$,
\begin{align*}
\bar{\nu}^{\ell/2} g_{3K} \geq \bar{\nu}^{\ell/2}  \exp(-3K/ \varphi(3K)) \rightarrow \infty \quad \textrm{ as } \quad K \rightarrow \infty.
\end{align*}
Now, the result follows from \eqref{zvu}, \eqref{zvul} and \eqref{tkr}.
\end{proof}
\section{Proof of Theorem \ref{mt}}
\subsection{Structure of the proof}
For $\ell, M \in \mathbb{N}$, we define the class $\kS(\ell,M)$ as the set of all graphs containing  a sequence of $\ell$ disjoint star graphs of size $M$ with centers $(x_i)_{i \leq \ell}$, such that $d(x_i, x_{i+1}) \leq \psi_1(M) +1 $ for all $i \leq \ell-1$.

The proof of Theorem \ref{mt} relies on the following propositions.
\begin{prop} \label{pcm}
For any positive integer $M$, there exist positive constants $c$ and $K$, such that $K \geq M$ and  w.h.p. $G_n$ belongs to the class $\kS([cn],K)$.
 \end{prop}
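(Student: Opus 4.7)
The plan is to construct the chain inductively using the exploration coupling of Proposition~\ref{ppe} together with Lemma~\ref{lqk}. Fix $K \geq \max(M, K_1)$, and work throughout on the event $\kE(K)$, which has probability tending to $1$ by \eqref{kek}.

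First I would set $U_0 = V_n$ and pick any starting vertex $v_1$. Inductively, suppose we have built star centers $x_1, \ldots, x_{i-1}$ and hold an anchor vertex $v_i$ taken to be an unused neighbor of $x_{i-1}$ in $U_{i-1}$ (for $i = 1$, $v_1$ is the initial vertex). Coupling the ball $B_{\psi_1(K)}(v_i, U_{i-1})$ with the marked mixed-Poisson Galton-Watson tree as in Proposition~\ref{ppe}, Lemma~\ref{lqk} guarantees that with conditional probability at least $\theta_1$ there exists a vertex $x_i \in B_{\psi_1(K)}(v_i, U_{i-1})$ with degree at least $3K + 1$ in the induced subgraph. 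Upon success I append to the chain the star with center $x_i$ and $K - 1$ of its unused neighbors (giving a star of size $K \geq M$), pick $v_{i+1}$ to be yet another unused neighbor of $x_i$ (at least $2K + 2$ remain), and set $U_i = U_{i-1} \setminus V(\textrm{star}_i)$. Since $v_i$ is a neighbor of $x_{i-1}$ and $x_i$ lies within distance $\psi_1(K)$ of $v_i$, the distance condition $d(x_{i-1}, x_i) \leq \psi_1(K) + 1$ required by $\kS(\cdot, K)$ is satisfied.

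Each iteration removes from $U$ only a number of vertices bounded by the (random but $n$-independent) size of $B_{\psi_1(K)}(v_i, U_{i-1})$, so with concentration we can keep $|U_i| \geq (1 - \varkappa_2)n$ throughout $i \leq [cn]$ provided $c$ is small enough, preserving the applicability of Lemma~\ref{lqk}. To obtain a chain of linear length despite the constant per-step failure probability $1 - \theta_1$, I would compare the extension procedure with a supercritical oriented percolation: at each iteration, if the exploration from $v_i$ fails, we retry using a different unused neighbor of $x_{i-1}$ as the anchor (of which $\geq 2K+2$ are available, after removing a few already explored ones). After a bounded number of such retries, the extension succeeds with probability close to $1$, and a standard renormalization argument then produces $[cn]$ successful successive extensions w.h.p.

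The main obstacle is handling the retries rigorously: each attempt reveals edges in the graph, so successive attempts at the same step $i$ are not literally independent. The coupling with an oriented percolation requires showing that, conditionally on the entire history up to the current attempt, the next exploration is still dominated by a fresh marked mixed-Poisson Galton-Watson tree of the correct law. The key fact that makes this feasible is that Lemma~\ref{lqk} asks only for the global condition $|U| \geq (1 - \varkappa_2)n$ and the event $\kE(K)$, both of which persist throughout a linear number of explorations; nevertheless, careful bookkeeping of which vertices have been revealed (and removal of them from successive candidate sets $U$) is the delicate point of the argument.
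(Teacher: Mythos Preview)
Your outline matches the paper's Task~II: the paper's ``experiment'' $Ep(F,S,K)$ is precisely your retry mechanism (run $K$ explorations from the $K$ neighbours in a seed set, succeed if any one finds a high-degree vertex), and the comparison you propose is the biased random walk used in Proposition~\ref{pc2}. What is missing is the paper's Task~I, and without it the argument does not yield a linear chain w.h.p.

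The gap is this. Even with $K$ retries, a single extension step succeeds with some fixed $p<1$. If on total failure you backtrack to seeds attached to $x_{i-2}$, then the number of available seed sets follows a random walk with positive drift, but that walk \emph{starts at a bounded value}, so by gambler's ruin it hits $0$ (and the construction dies) with probability bounded away from $0$ uniformly in $n$; your appeal to ``oriented percolation'' and ``standard renormalization'' does not address this. The paper's remedy is Task~I: a separate branching-process argument (Lemma~\ref{ltr} and Proposition~\ref{pc1}) that, revealing only $o(\log n)$ vertices, first produces a chain of length $L_n=\lfloor\log\log\log n\rfloor$ w.h.p.; the random walk of Task~II then starts from $2L_n\to\infty$ active seed sets and reaches $\Theta(n)$ before $0$ with probability $1-o(1)$ via optional stopping. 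A smaller point you must also make precise: to keep $|U_i|\ge(1-\varkappa_2)n$ across $\Theta(n)$ steps, the paper does not rely on concentration of the (possibly heavy-tailed) ball sizes but instead halts each BFS the instant a vertex of degree $\ge 3K$ appears, so that every fully explored vertex has fewer than $3K$ children and each exploration reveals at most $\psi_2(K)=(3K)^{\psi_1(K)+1}$ vertices deterministically (Lemma~\ref{lep}(i)).
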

\begin{prop} \label{tng} Let $\tau_{\ell,M}$ be the extinction time of the contact process on a graph of the class $\kS(\ell,M)$ starting from full occupancy. Then there exist positive constants $c$ and $C$ independent of $\lambda$, such that if $ h(\lambda)M \geq C \psi_1(M) $, then
\begin{align} \label{taun} 
\pp(\tau_{\ell,M} \geq \exp(c \lambda^2  \ell M )) \rightarrow 1 \quad \textrm{as} \quad \ell \rightarrow \infty,
\end{align}
with  $h(\lambda)= \bar{\lambda}^2/|\log \bar{\lambda}|$ and $\bar{\lambda}= \lambda \wedge 1/2$.
\end{prop}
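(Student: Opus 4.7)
The approach is a block-renormalization argument, coupling the contact process on $G\in\kS(\ell,M)$ with a highly supercritical one-dependent oriented site percolation on $\{1,\ldots,\ell\}\times\mathbb{N}$, in the spirit of the method in \cite{C2}. The space coordinate indexes the $\ell$ stars along the chain, time is discretized into blocks of length $T_0$, and macro-site $(i,k)$ is declared \emph{open} whenever star $i$ contains an infected vertex at time $kT_0$.

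Two ingredients are needed. First, a classical star-persistence estimate (see e.g.\ \cite{MMVY}): started from the central vertex infected, the contact process on a single star of size $M$ maintains an infected vertex throughout $[0,\exp(c_0 h(\lambda)M)]$, and indeed keeps the center itself infected a positive fraction of the time, with probability at least $1-\exp(-c_0 h(\lambda) M)$. Second, a transmission estimate: conditionally on $x_i$ being infected during a fraction of $[0,T_0]$, the probability that $x_{i+1}$ becomes infected by time $T_0$ is at least $1-\exp\bigl(-T_0 (c_1\lambda)^{\psi_1(M)+1}\bigr)$, which follows by a direct path estimate in the graphical representation along the path of length at most $\psi_1(M)+1$ joining $x_i$ to $x_{i+1}$. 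The hypothesis $h(\lambda)M\ge C\psi_1(M)$, with $C$ large enough, is precisely what allows us to choose $T_0$ in the window $(c_1\lambda)^{-(\psi_1(M)+1)}\ll T_0\ll \exp(c_0 h(\lambda)M)$, so that each block event fails with probability at most $\exp(-c_2\lambda^2 M)$.

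Combining the two estimates via the Liggett-Schonmann-Stacey comparison theorem gives stochastic domination of the open-site field by a Bernoulli oriented percolation whose density can be made arbitrarily close to $1$. A Peierls-type union bound over time slices then shows that the macro-process dies out by time $KT_0$ with probability at most $K\exp(-c_3\lambda^2\ell M)$; choosing $K=\exp(c_3\lambda^2\ell M/2)$ yields $\tau_{\ell,M}\ge KT_0\ge \exp(c\lambda^2\ell M)$ with probability tending to $1$. The main technical obstacle is the transmission step: since different pairs of consecutive stars may share vertices and arrows on the paths connecting them, the block events at distinct macro-sites are not literally independent, and obtaining an honest one-dependent percolation requires reserving disjoint portions of the Poisson point process of the graphical representation for each block, which demands some careful bookkeeping.
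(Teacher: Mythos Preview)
Your proposal is correct and follows essentially the same route as the paper: the paper's own proof is a two-line sketch that invokes the oriented-percolation comparison of \cite[Lemma~3.2]{C2} together with the star-graph survival/transmission mechanism of \cite[Lemmas~3.1--3.2]{MVY}, which is exactly the block-renormalization scheme you describe. One small point: the persistence estimate you quote with exponent $c_0 h(\lambda)M$ should, per \cite{MVY}, read $c_0\lambda^2 M$ (this is what produces the $\exp(c\lambda^2\ell M)$ in \eqref{taun}); the role of $h(\lambda)$ is only in the \emph{ignition} probability for a star and in balancing it against the path-transmission cost $(c_1\lambda)^{\psi_1(M)+1}$, which is why the hypothesis $h(\lambda)M\ge C\psi_1(M)$ enters.
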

\begin{prop} \label{cvel}
Let $(G_n^0)$ be a sequence of connected graphs, such that $|G_n^0|\le n$ and $G_n^0$ belongs to the class $\kS(k_n,M)$, for some sequence $(k_n)$. 
Let $\tau_n$ denote the extinction time of the contact process on $G_n^0$ 
starting from full occupancy.  Then there exists a positive constant $C$, such that if $ h(\lambda) M \geq C \psi_1(M)$ with $h(\lambda)$ as in Proposition \ref{tng} and
\begin{align} \label{nas}
\frac{k_n}{d_n \vee \log n} \rightarrow \infty,
\end{align}
with  $d_n$ the diameter of $G_n^0$, then 
\begin{align*} 
\frac{\tau_n}{\en (\tau_n)}\quad  \mathop{\longrightarrow}^{(\kD)}_{n\to \infty} \quad  \kE(1).
\end{align*}
\end{prop}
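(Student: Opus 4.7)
The plan is to follow the general metastability scheme from \cite{C2}: first show that $\tau_n$ is much larger than a natural ``mixing time" $s_n$ of the contact process on $G_n^0$, then establish a coupling lemma asserting that two contact processes started from any two non-extinct configurations agree at time $s_n$ with high probability. Together these two ingredients yield the convergence in law to $\kE(1)$ via an abstract criterion already used in \cite{C2}.

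For the first ingredient, since $G_n^0 \in \kS(k_n,M)$ and $h(\lambda)M \geq C\psi_1(M)$, Proposition \ref{tng} gives $\pp(\tau_n \geq \exp(c\lambda^2 k_n M)) \to 1$. The comparison with supercritical oriented percolation in the proof of that proposition moreover yields an exponential tail bound for $\tau_n$, so that $\E(\tau_n) \geq \exp(c'\lambda^2 k_n M)$ for some $c'>0$. Set $s_n := C_0(d_n \vee \log n)$ with $C_0$ a large enough constant. I would next prove that, starting from any single vertex $v$, conditionally on survival up to time $s_n$ the configuration $\xi^v_{s_n}$ infects at least one leaf of every star graph of the $\kS(k_n,M)$ structure, with probability tending to $1$. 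Indeed, the diameter bound allows a surviving infection to reach an arbitrary star center in time $O(d_n)$ with positive probability; once a star center is touched it saturates with $\Theta(M)$ infected leaves at positive rate (standard star-graph estimate); and the chain-of-stars oriented percolation from Proposition \ref{tng} then spreads the infection to all $k_n$ stars within an additional $O(\log n)$ window.

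For the second ingredient, given two contact processes $\xi^A,\xi^B$ on $G_n^0$ built from the same graphical construction, the aim is to show
\begin{align*}
\pp\bigl(\xi^A_{2s_n}=\xi^B_{2s_n}\text{ or both empty}\bigr)\to 1.
\end{align*}
Applying the previous step to $A$ and to $B$ separately, on the survival event each process infects at least one leaf per star at time $s_n$. Two contact processes on a single star that share the graphical construction and start from any two nonempty configurations couple in time $O(\log M)$ via the dynamics of the central vertex; propagating this agreement simultaneously across all $k_n$ stars, again via the oriented percolation comparison, gives a total coupling time of order $s_n$.

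Since $k_n/(d_n \vee \log n)\to \infty$ we have $s_n = o(\log \E(\tau_n))$, so the two ingredients above fulfil the hypotheses of the abstract metastability criterion of \cite{C2} and deliver $\tau_n/\E(\tau_n)\to \kE(1)$ in distribution. The main obstacle is the coupling step: one has to combine the exponential survival of the contact process on a single star, the small diameter of $G_n^0$, and the chain structure of $\kS(k_n,M)$ in order to propagate agreement across the whole graph in a time negligible compared to $\E(\tau_n)$; the condition $k_n/(d_n \vee \log n)\to \infty$ is precisely what keeps this mixing time $s_n$ below $\log\E(\tau_n)$.
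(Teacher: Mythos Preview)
Your proposal is correct in spirit and matches the paper's approach: the paper's proof is a one-line deferral to Lemma~3.3 of \cite{C2}, and what you have written is a reasonable reconstruction of the metastability scheme used there (exponential survival from Proposition~\ref{tng}, a mixing/coupling time $s_n \asymp d_n \vee \log n$, and the abstract criterion yielding the exponential limit once $s_n = o(\log \E(\tau_n))$, which is exactly what \eqref{nas} guarantees).

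One remark on the details: the coupling step in \cite{C2} (and in the earlier work of Mountford et al.) is usually carried out via self-duality rather than by a direct star-by-star coupling. One shows that $a_n := \sup_v \pp(\xi^v_{s_n}\neq\varnothing,\ \xi^v_{s_n}\neq\xi^{\bf 1}_{s_n}) \to 0$, and then duality gives $\pp(\xi^A_{2s_n}\neq\xi^B_{2s_n},\ \textrm{both}\neq\varnothing)\le 2n a_n$. Your direct argument that two processes on a single star ``couple in time $O(\log M)$'' and that this propagates along the chain is plausible but would require some care to make rigorous; the duality route sidesteps this. Either way, the essential structure of your argument coincides with what the paper invokes.
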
   
{\it Proof of Theorem \ref{mt}}. Observe that  Proposition \ref{pcm} and Lemma \ref{tng} imply the lower bound on $\tau_n$. On the other hand, the upper bound follows from  Lemma 3.4 in \cite{C2} and the fact that $|E_n| \asymp n$ w.h.p., see \cite[Vol. I, Theorem 6.6]{H15}. Finally, the convergence in law of $\tau_n/\en(\tau_n)$ can be proved similarly as in the proof of Theorem 1.1 in \cite{C2} by using Propositions \ref{pcm},  \ref{cvel}  and the following:
\begin{itemize}
\item[$\bullet$] w.h.p. $d_n = \kO(\log n)$ with $d_n$ the diameter of the largest component of the IRG, \\
\item[$\bullet$] w.h.p. the size of the second largest component in the IRG is $\kO(\log n)$.
\end{itemize}  
These claims are proved in Theorems 3.12 and 3.16 in \cite{BJR} for a general model of IRG.  \hfill $\square$

\vspace{0.2 cm}
{\it Proof of Proposition \ref{tng}}. Similarly to Lemma 3.2 in \cite{C2}, we can prove \eqref{taun} by using a comparison between the contact process  and an oriented percolation on $\llbracket 1,n \rrbracket$ with density close to $1$. Note that  here, we use a mechanism of  infection  between star graphs instead of complete graphs as it was the case in \cite{C2}. The mechanism for star graphs is described in Lemmas 3.1 and 3.2 in \cite{MVY} and   the function $h(\lambda)$ is chosen appropriately to apply these results. \hfill $\square$

\vspace{0.2 cm}
{\it Proof of Proposition \ref{cvel}}. The proof is the same as for Lemma 3.3 in \cite{C2}.  \hfill $\square$
\subsection{Proof of Proposition \ref{pcm}} This subsection is divided into four parts. In the first part, we define a preliminary process, called  an exploration, which uses Proposition   \ref{ppe} and Lemma \ref{lqk} to discover the neighborhood of a vertex. In Parts two and three, we describe the two main tasks, and the last part gives the conclusion.
\subsubsection{Exploration process} For a given vertex $v$ and a set $U \subset V_n$, we denote by $\kN(v,U)$ the set of neighbors of $v$ in $U$, i.e.
$$\kN(v,U)=\{w \in U : w \sim v\}.$$
Moreover, we will define  an {\bf exploration}  of $v$ in $U$ of type $K$ (and denote it by $E_K(v,U)$ and call $U$ the {\it source set} of the exploration). The aim of this exploration is  to find a vertex $x$ in $U$ with  degree larger than $3K$ at  distance less than $\psi_1(K)$ from $v$,  by exploring at most $\psi_2(K)$ vertices in $U$, with 
$$\psi_2(K)=(3K)^{\psi_1(K)+1}.$$

\vspace{0.15 cm}
First, we set $x_0=v$, $U_0=U$ and $W_0=\{x_0\}$ and call it   the {\it waiting set}. We define a sequence of trees $(T^{k}(v,U))_{k\geq 0}$ as the record of the exploration, starting with $T^{0}(v,U)=\{x_0\}$.  

\vspace{0.15 cm}
\begin{itemize}
\item[$\bullet$] If $\kN(x_0,U_0)= \varnothing$,  we  define  $U_1=U_0$ and $W_1=W_0 \setminus \{x_0\}$ and $T^1(v,U)=T^0(v,U)$. 
\item[$\bullet$] If $|\kN(x_0,U_0)| \geq 3K$,  we arbitrarily choose $3K$ vertices in $\kN(v,U)$ to form three {\it seed} sets of size $ K$  denoted by $F_{v,1}$,  $F_{v,2}$ and $F_{v,3}$. Then we declare that $E_K(v,U)$ is {\it successful}; we stop the exploration and define  
\begin{align*}
U_1=U \setminus (F_{v,1} \cup F_{v,2} \cup F_{v,3}).
\end{align*}
\item[$\bullet$] If $1 \leq |\kN(x_0,U_0)| < 3K$, we  define  
\begin{eqnarray*}
U_1 &= & U_0 \setminus \kN(x_0,U_0), \\
T^{1}(v,U)&=& T^{0}(v,U) \cup \kN(x_0,U_0) \textrm{ together with the edges} \\
 && \hspace{5cm} \textrm{ between $x_0$ and } \kN(x_0,U_0), \\ 
W_1 & = & (W_0 \setminus \{x_0\}) \cup \{x \in \kN(x_0,U_0): d_{T^1(v,U)} (x_0, x) \leq  \psi_1(K)\} , 
\end{eqnarray*}
with  $d_T(x,y)$  the graph distance between $x$ and $y$ in a tree $T$. 

\noindent Then we chose an arbitrary vertex $x_1$ in $W_1$ and repeat this step with $x_1$ and $U_1$ in place of $x_0$ and $U_0$. 
\end{itemize} 

\vspace{0.15 cm}
\noindent We continue like this until the waiting set is empty, or  we succeed at some step. 

\vspace{0.15 cm}
Note that after the $k^{th}$ step, we define 
\begin{displaymath}
T^{k+1}(v,U) = \left \{ \begin{array}{ll}
T^{k}(v,U) \cup \kN(x_k,U_k) \textrm{ together with the edges } & \\ 
\hspace{3.5 cm} \textrm{ between $x_k$ and } \kN(x_k,U_k)  & \textrm{ if } |\kN(x_k,U_{k})| < 3K \\
T^{k}(v,U) & \textrm{ if } |\kN(x_k,U_{k})| \geq 3K, 
\end{array} \right.
\end{displaymath}
and
\begin{displaymath}
U_{k+1} = \left \{ \begin{array}{ll}
U_{k} \setminus \kN(x_k,U_k) & \textrm{ if } |\kN(x_k,U_{k})| < 3K \\
U_{k} \setminus (F_{x_k,1} \cup F_{x_k,2} \cup F_{x_k,3} ) & \textrm{ if } |\kN(x_k,U_{k})| \geq 3K, 
\end{array} \right.
\end{displaymath}
and 
$$W_{k+1}= (W_k \setminus \{x_k\}) \cup \{x \in \kN(x_k,U_k): d_{T^{k+1}(v,U)} (x_0, x) \leq  \psi_1(K)\}. $$

\vspace{0.15 cm}
\noindent If the process stops after $k_0$ steps, we define the remaining source set
$$\tilde{U}=U_{k_0+1}.$$

\vspace{0.15 cm}
\noindent When an exploration is successful,  its outputs are  the set $\tilde{U}$ and  a vertex, say $u$, with three seed sets $F_{u,1}, F_{u,2}$ and $F_{u,3}$ of size $K$. Otherwise, the output is simply   $\tilde{U}$.

\begin{lem} \label{lep}
The following statements hold.
\begin{itemize}
\item[(i)] For all $v$ and $U$,  
\begin{align*}
|\tilde{U}| \geq |U| - \psi_2 (K),
\end{align*}
with $\psi_2(K)= (3K)^ {\psi_1(K)+1}$. 
\item[(ii)] For all $K \geq K_1$, we have
\begin{align*}
\lim_{n \rightarrow \infty} \pp(E_K(v,U) \textrm{ is successful} \mid \kE(K),  |U| \geq (1-  \varkappa_2)n)  \geq \theta_1,
\end{align*}
with $\varkappa_2$, $\theta_1$ and $K_1$ as in Lemmas \ref{gea} and \ref{lqk}. 
\end{itemize}
\end{lem}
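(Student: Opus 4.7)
The plan is to handle the two parts separately: (i) by a deterministic counting argument based purely on the mechanics of the exploration, and (ii) by combining the Galton--Watson coupling of Proposition \ref{ppe} with the tree-growth estimate of Lemma \ref{lqk}.

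For part (i), I would bound $|U \setminus \tilde U|$ directly. The set $U \setminus \tilde U$ decomposes into vertices placed in the exploration tree $T^{k_0}(v,U)$ at some step together with the at most $3K$ vertices forming the seed sets $F_{x_{k_0},1}, F_{x_{k_0},2}, F_{x_{k_0},3}$ in the successful case. By construction, the waiting set $W_k$ only ever contains vertices at tree-distance at most $\psi_1(K)$ from the root $x_0 = v$, hence $T^{k_0}(v,U)$ has depth at most $\psi_1(K)$. Moreover, at every non-terminating step the current vertex $x_k$ contributes strictly fewer than $3K$ children to the tree (otherwise $E_K(v,U)$ would have succeeded and halted). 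Summing the resulting geometric series gives
\[
|T^{k_0}(v,U)| \;\leq\; \sum_{i=0}^{\psi_1(K)}(3K-1)^i \;\leq\; (3K)^{\psi_1(K)+1},
\]
and adding the at most $3K$ seed vertices yields $|U \setminus \tilde U| \leq \psi_2(K)$, which is (i).

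For part (ii), I would invoke the coupling of Proposition \ref{ppe} to realize $B_{\psi_1(K)}(v,U)$ jointly with the marked Galton--Watson tree $\tilde{\T}_{\psi_1(K)}(v,U)$, and run the exploration $E_K(v,U)$ simultaneously with a BFS of $\tilde{\T}$. Define
\[
\mathcal{A}(K) \;=\; \{\exists\, x \in \tilde{\T}(v,U) : |x| \leq \psi_1(K),\ \deg(x) \geq 3K+1\}.
\]
On $\mathcal{A}(K)$, the first such vertex $x$ discovered by the tree BFS furnishes, via its at least $3K+1$ children, an image vertex $y \in B_{\psi_1(K)}(v,U)$ at graph-distance at most $\psi_1(K)$ from $v$ whose neighbour set contains the images of those children; the thinning rule guarantees these children have marks distinct from all ancestors of $x$ along the path from the root to $x$, and a short argument then shows they survive in the source set $U_k$ when the exploration processes $y$. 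Thus $E_K(v,U)$ succeeds on $\mathcal{A}(K)$. Conditioning on $\kE(K)$ and on $|U| \geq (1-\varkappa_2)n$, Lemma \ref{lqk} gives
\[
\pp(E_K(v,U) \text{ successful} \mid \kE(K),\ |U|\geq(1-\varkappa_2)n) \;\geq\; \pp(\mathcal{A}(K) \mid \kE(K)) \;\geq\; \theta_1
\]
for all $K \geq K_1$, which is (ii).

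The most delicate point is the coupling argument in (ii): one must verify that the $\geq 3K+1$ children of $x$ in $\tilde{\T}(v,U)$ really do remain in $U_k$ at the moment the exploration processes the image of $x$. The thinning rule rules out mark-collisions along the ancestral path from $v$ to $x$, but in principle the children of $x$ could share marks with vertices discovered on previously explored sibling branches. The clean way to handle this is to combine part (i), which says that at most $\psi_2(K)$ vertices have been removed from $U$ up to this point, with the observation that under $\kE(K)$ the coupling of Proposition \ref{ppe} realizes the first $\psi_2(K)$ tree-vertices with pairwise distinct marks with probability tending to $1$, so mark-collisions do not destroy the desired count.
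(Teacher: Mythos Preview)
For part (i), your counting argument is correct and matches the paper's.

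For part (ii), your approach is close to the paper's but the handling of the ``delicate point'' is less direct and omits a needed case distinction. The exploration $E_K(v,U)$ runs on the \emph{graph} ball $B_{\psi_1(K)}(v,U)$, and Proposition~\ref{ppe} only guarantees that this ball contains $\tilde{\T}_{\psi_1(K)}(v,U)$ as a subgraph on the same vertex set; there may be additional graph edges. Your proposed fix---pairwise distinct marks among the first $\psi_2(K)$ vertices of $\T$---is a statement about the coupling tree and does not by itself rule out such extra edges, which are precisely what could cause a child of your high-degree vertex to be absorbed into some earlier $\kN(x_j,U_j)$ and removed from the source set before $x$ is reached.

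The paper instead first disposes of the case $w_v > 10K$, where on $\kE(K)$ the root already has $\geq 3K$ neighbours with probability bounded below and the exploration succeeds at step zero. For $w_v \leq 10K$ it shows directly that
\[
\pp\bigl(B_{\psi_1(K)}(v,U) \text{ has no cycle}\bigr) \to 1,
\]
by the same birthday-type argument used for $\T_\ell \equiv \tilde{\T}_\ell$ in Lemma~\ref{lqk}. The payoff of this formulation is the clean observation that when $B_{\psi_1(K)}(v,U)$ is a tree, the order in which waiting-set vertices are processed is irrelevant to the outcome of $E_K(v,U)$, so
\[
\{E_K(v,U) \text{ successful}\} \supset \{\exists\, x \in B_{\psi_1(K)}(v,U):\deg(x)\ge 3K+1\} \cap \{B_{\psi_1(K)}(v,U)\text{ is a tree}\},
\]
and Lemma~\ref{lqk} together with Proposition~\ref{ppe} finishes. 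You also do not make the split on $w_v$; without it, neither your distinct-marks claim nor the no-cycle claim can hold uniformly, since a large $w_v$ produces too many root-children for the collision bound to apply.
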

\begin{proof}
Part (i) follows from the facts that  at each step we remove from the source set at most $3K$ vertices, and that we only explore the vertices at distance less than or equal to $\psi_1(K)$ from $v$.

\noindent  We now prove  (ii). Similarly to Lemma \ref{lqk}, if $w_v$ -the weight of $v$- is larger than $10K$, 
\[\pp(\deg(v)\geq 3K) \geq \theta_1,\]
and thus (ii) follows. Suppose that $w_v \leq 10K$. Then using the same argument in  Lemma \ref{lqk}, or \cite[Vol. II, Corollary 3.13]{H15} (for showing that $\T_{\ell}(v,U) \equiv \tilde{\T}_{\ell}(v,U) $ w.h.p. when $w_v$ is bounded), we get 
\begin{equation} \label{nbsk}
\pp(B_{\psi_1(K)}(v,U) \textrm{ has no cycle}) \rightarrow 1 \quad \textrm{as} \quad n\rightarrow \infty.
\end{equation}  
Suppose that $B_{\psi_1(k)}(v,U)$ is a tree (or has no cycle). Then the order of explorations of vertices in the waiting set  does not effect to the outcome of $E_K(v,U)$.  Thus 
\begin{equation*}
\{E_K(v,U) \textrm{ is successful}\} \supset \{\exists \, x \in B_{\psi_1(K)}(v,U): \deg(x) \geq 3K+1\} \cap \{B_{\psi_1(K)}(v,U) \textrm{ is a tree}\}.
\end{equation*}
 Therefore, the result follows from  Proposition \ref{ppe}, Lemma \ref{lqk} and \eqref{nbsk}. 
 \end{proof}
\subsubsection{Task I} The goal of this task is to show that w.h.p. by discovering $o(\log n)$ vertices, we can find in $G_n$ a subgraph belonging to  the class $\kS(L_n, 3K)$ with 
$$L_n = [\log \log \log n].$$
The proof of this task contains two parts. First, by introducing a process called {\it trail}, we show that given a large source set $U$, with positive probability by exploring at mots $o(\log n)$ vertices in $U$, we can find a graph of class  $\kS(L_n, 3K)$. Secondly, we continuously operate trails. By the first part, the  time to the first success is stochastically dominated by  a geometric random variable with positive mean, thus it is finite with high probability. Let us start with the definition of trail.   

\vspace{0.15 cm}
 For  $K \geq K_1$ and $U \subset V_n \setminus\{v\}$, with $|U| \geq (1- \varkappa_2/2)n$ and $\varkappa_2 = \varkappa_2(\varepsilon_0,K)$ as in Lemma \ref{gea}, we define a {\bf trial} $Tr(U,L_n,K)$ as follows. First, we take arbitrarily a vertex $v \in U$.
 
 \vspace{0.4 cm}
 At {\it level $0$}, we define $\tilde{W}_0= \{v\}$ and call it the waiting set at level $0$. Then we perform $E_K(v,U)$ and call $\tilde{U}_v$ the source set after this exploration. If it fails, we declare that $Tr(U,L_n,K)$ {\it fails}. Otherwise, we are now in {\it level $1$} and continue as follows.

\vspace{0.15 cm}
  Let $x_1$ be the vertex with degree larger than $3K+1$ which makes $E_K(v,U)$ successful and  let $F_{x_1,1}$,  $F_{x_1,2} $ and $F_{x_1,3}$ be its three seed sets of size $ K$.  We denote  $\tilde{W}_1=F_{x_1,3}$ and call it the waiting set at level 1 ($F_{x_1,1}$ and  $F_{x_1,2} $ are reserved for Task II). We sequentially perform explorations of vertices in $\tilde{W}_1$.  More precisely, we choose arbitrarily a vertex, say $y_1$ in $\tilde{W}_1$ and operate $E_K(y_1, \tilde{U}_v)$ and get a new source set $\tilde{U}_{y_1}$. Then we  operate $E_K(y_2,\tilde{U}_{y_1})$ with $y_2$ chosen arbitrarily from $\tilde{W}_1 \setminus \{y_1\}$, and so on.

\vspace{0.15 cm} 
If none of these explorations is successful, we  declare that the trial {\it fails}. 

\vspace{0.15 cm}
If some of those are successful, we are in {\it level $2$} and get some triples of  seed sets $F_{.,1}, F_{.,2}$ and $F_{.,3}$. Denote by $\tilde{W}_2$ the waiting  set at level $2$, which is the union of all the seed sets of the third type $F_{.,3}$.  Then we sequentially perform the explorations of vertices in  $ \tilde{W}_2$.

\vspace{0.15 cm}
We continue this process until  either we explore all vertices in waiting sets, or  when we exceed to the $L_n$-th level. 

\vspace{0.15 cm}
We declare that $Tr(U,L_n,K)$ is {\it successful} if we can access to the $L_n$-th level, or that it has failed  otherwise.

\vspace{0.15 cm} The output of a successful trial is a sequence of $L_n$ vertices associated with $L_n$ pairs of disjoint seed sets and a remained source set $\bar{U}$, with $|\bar{U}| \geq |U| - o(\log n)$. The output of a failed trail is simply $\bar{U}$. 

\begin{lem} \label{ltr} The following statements hold.
\begin{itemize}
\item[(i)] A trial discovers at most $K^{L_n+1} \psi_2(K)$ vertices.
\item[(ii)] There exist constants $\theta_2 \in (0,1)$ and $K_2 \geq K_1$, such that for any $K\geq K_2$ and $U \subset V_n $  satisfying  $|U| \geq (1- \varkappa_2/2)n$, we have
\begin{align*}
\lim_{n\rightarrow \infty} \pp(Tr(U,L_n,K)\textrm{ is successful } \mid \kE(K) ) \geq \theta_2, 
\end{align*}
with $\varkappa_2 $ and $ K_1$  as in Lemmas \ref{gea} and \ref{lqk}.
\end{itemize}
\end{lem}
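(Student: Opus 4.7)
My plan is to prove (i) by a simple level-by-level count of explorations, and to prove (ii) by verifying that the source set stays large enough throughout the trial so that Lemma \ref{lep}(ii) continues to apply to each exploration, and then by stochastically dominating the growth of the waiting sets by a super-critical Galton--Watson branching process whose positive survival probability will supply $\theta_2$.

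\textbf{Step 1 (Part (i)).} I would set $N_\ell = |\tilde{W}_\ell|$. By construction $N_0=1$, and if level $0$ succeeds then $N_1=|F_{x_1,3}|=K$. At each level $\ell\ge 1$, every successful exploration adds exactly one seed set of size $K$ (the third one, $F_{\cdot,3}$) to the next waiting set, so $N_{\ell+1}\le K N_\ell$; inductively $N_\ell\le K^\ell$. The total number of explorations is therefore bounded by $\sum_{\ell=0}^{L_n}K^\ell\le K^{L_n+1}$, and by Lemma \ref{lep}(i) each exploration removes at most $\psi_2(K)$ vertices from its source set; multiplying gives (i).

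\textbf{Step 2 (Source set remains large).} From (i) and $L_n=[\log\log\log n]$, the trial removes at most $K^{L_n+1}\psi_2(K)=\psi_2(K)(\log\log n)^{\log K}=o(n)$ vertices in total. Starting from $|U|\ge(1-\varkappa_2/2)n$, every exploration in the trial then receives a source set of size at least $(1-\varkappa_2)n$ once $n$ is large enough. By Lemma \ref{lep}(ii), each such exploration succeeds with probability at least $\theta_1-o(1)$ conditionally on $\kE(K)$ and on the history up to that exploration; the key point is that the next exploration only probes edges incident to vertices of the untouched remainder of the source set, so its success is a fresh event for the filtration generated by the past.

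\textbf{Step 3 (Branching domination and conclusion).} Let $(Z_\ell)$ be the Galton--Watson process with $Z_0=1$ in which each individual has $K$ offspring with probability $\theta_1/2$ and $0$ offspring otherwise. I would then check inductively that $N_\ell\succeq Z_\ell$, using that the number of successes among the $N_\ell$ explorations at level $\ell$ stochastically dominates $\bin(N_\ell,\theta_1/2)$, hence $N_{\ell+1}$ dominates $K\cdot\bin(N_\ell,\theta_1/2)$. Setting $K_2:=\lceil 4/\theta_1\rceil\vee K_1$, for every $K\ge K_2$ the mean offspring of $(Z_\ell)$ is $K\theta_1/2\ge 2$, so $(Z_\ell)$ is super-critical with survival probability $\rho(K)\ge\rho(K_2)>0$. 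Since the trial is successful as soon as $N_{L_n}\ge 1$,
\[\pp\bigl(Tr(U,L_n,K)\text{ is successful}\mid\kE(K)\bigr)\ge\pp(Z_{L_n}\ge 1)-o(1)\longrightarrow\rho(K)\ge\rho(K_2),\]
and choosing $\theta_2:=\rho(K_2)/2$ gives the claim. The main obstacle will be the stochastic-domination step: the explorations within a trial are entangled through the shared source set, so I would need to verify carefully that each conditional success probability remains at least $\theta_1-o(1)$ given the past, which is what makes the coupling with an independent branching process rigorous.
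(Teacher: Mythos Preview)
Your proposal is correct and follows essentially the same route as the paper: the level-by-level count giving at most $K^{L_n+1}$ explorations times $\psi_2(K)$ vertices per exploration for (i), and for (ii) the observation that the source set stays above $(1-\varkappa_2)n$ throughout, so each exploration succeeds with probability at least $\theta_1/2$, whence the waiting-set sizes dominate the same super-critical Galton--Watson process (offspring $K$ with probability $\theta_1/2$, else $0$). Your treatment is in fact slightly more careful than the paper's, since you explicitly secure a $\theta_2$ uniform in $K\ge K_2$ via the monotonicity $\rho(K)\ge\rho(K_2)$, and you flag the conditional-independence point needed for the branching comparison; the paper leaves both of these implicit.
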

\begin{proof} For (i), we observe that an exploration creates  at most one third type seed set  of size $K$. Then in a trial, we operate at most 
$$1+ K+ K^2 + \ldots+ K^{L_n} \leq K^{L_n+1} \quad \textrm{explorations.}$$ 
  Moreover, an exploration uses at most $\psi_2(K)$ vertices. Therefore, a trial discovers at most 
 $$ K^{L_n+1} \psi_2(K) = o(\log n) \textrm{ vertices}.$$
We now prove (ii). As a trial uses at most $o(\log n)$ vertices, and initially  $|U| \geq (1- \varkappa_1 /2)n$, during the trial $Tr(U,L_n,K)$ the source sets of explorations always have  cardinality larger than  $(1- \varkappa_2)n$.

Hence, by Lemma \ref{lep} (ii), for all $n$ and $K$ large enough,  on $\kE(K)$ each exploration in $Tr(U,L_n,K)$ is successful with probability larger than $\theta_1/2$. 

On the other hand, each successful exploration creates  $K$ new vertices in the next level.  Hence $(|\tilde{W}_i|)_{i \leq L_n}$- the numbers of vertices to explore up to the $L_n$-th level in the trial  stochastically dominate a branching process $(\eta_i)_{i\leq L_n}$ starting from  $\eta_0= 1 $  with reproduction law $\eta$ given by 
\begin{align*}
\pp(\eta = K) &= \theta_1/2 \\
\pp(\eta =0)& =1 -\theta_1/2.
\end{align*}
We choose $K$ large enough, such that $K \theta_1 >2$. Then  $(\eta_i)$ is  super critical, and thus
\begin{align*}
\pp(Tr(x,W,L_n,K) \textrm{ is successful } \mid \kE(K)) \geq \pp(\eta_{L_n} \geq 1) >0,
\end{align*}
which proves the result.  
\end{proof}
We  can now define  {\bf Task I} as follows. Let $U_1 = V_n$ and  operate $Tr(U_1, L_n,K)$. 
\begin{itemize}
\item[$\bullet$] If $Tr(U_1,L_n,K)$ is successful, we declare that Task I  is {\it successful}.
\item[$\bullet$] Otherwise, we call $\bar{U}_1$ the source set after this trial.  We then perform $Tr(U_2,L_n,K)$ with $U_2=\bar{U}_1$. 
\end{itemize}
We continuously operate   trials. We stop the task I if we get a success or we operate more than $L_n$ trails. 

 We declare that Task I  is {\it successful} if there is a successful trial and that it {\it fails} otherwise.

\begin{prop} \label{pc1} The following statements hold.
\begin{itemize}
\item[(i)] Task I uses at most $o(\log n)$ vertices. \\
\item[(ii)] We have
$$ \lim \limits_{n \rightarrow \infty} \pp(\textrm{Task I is successful} \mid \kE(K)) =1. $$
\end{itemize}
\end{prop}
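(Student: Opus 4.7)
The plan is to prove the two parts essentially separately: part (i) by a direct deterministic counting bound, and part (ii) by a geometric-tail argument over the at most $L_n$ successive trials making up Task I.

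For part (i), the approach is to combine the per-trial bound from Lemma \ref{ltr}(i), namely $K^{L_n+1}\psi_2(K)$ vertices, with the fact that Task I runs at most $L_n$ trials. The total is therefore at most $L_n \cdot K^{L_n+1}\psi_2(K)$. Since $L_n = [\log\log\log n]$ and $\psi_2(K)$ depends only on $K$, this quantity is of order $\log\log\log n \cdot (\log\log n)^{\log K}$, which is $o(\log n)$ because $(\log\log n)^{\log K}$ grows strictly slower than any positive power of $\log n$. No probabilistic input is needed here.

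For part (ii), the plan is to show that conditionally on $\kE(K)$, each successive trial of Task I succeeds with probability uniformly bounded below by $\theta_2 - o(1)$, so that the probability that all $L_n$ trials fail decays geometrically. The crucial consequence of part (i) is that at the beginning of every trial the current source set $U$ still satisfies $|U| \geq n - o(\log n) \geq (1-\varkappa_2/2)n$ for $n$ large, so the hypothesis of Lemma \ref{ltr}(ii) is met throughout. Let $A_k$ denote the event that the $k$-th trial of Task I is successful. Applying Lemma \ref{ltr}(ii) step by step, one obtains
\[
\pp(A_k^c \mid A_1^c \cap \cdots \cap A_{k-1}^c \cap \kE(K)) \leq 1 - \theta_2 + o(1)
\]
uniformly in $k \leq L_n$, and iterating via the tower property yields
\[
\pp(\textrm{Task I fails} \mid \kE(K)) \leq (1 - \theta_2 + o(1))^{L_n} \longrightarrow 0,
\]
since $L_n \to \infty$.

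The main technical point I anticipate is justifying the uniformity of the $o(1)$ in the sequential conditioning. I would handle this by noting that Lemma \ref{ltr}(ii) in fact gives the asymptotic bound $\theta_2$ for \emph{every} deterministic $U$ with $|U| \geq (1-\varkappa_2/2)n$. During Task I, the source set at the start of the $k$-th trial is measurable with respect to the outcomes of the previous $k-1$ trials and, on the event that those all failed, deterministically satisfies this size hypothesis by part (i). Conditioning on the $\sigma$-algebra generated by the first $k-1$ trials and applying Lemma \ref{ltr}(ii) to the random source set that results then yields the desired uniform bound, at which point the geometric decay above concludes the proof.
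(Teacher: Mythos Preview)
Your proposal is correct and follows essentially the same approach as the paper: for (i) you multiply the per-trial bound of Lemma \ref{ltr}(i) by the number $L_n$ of trials and verify $L_n K^{L_n+1}\psi_2(K)=o(\log n)$, and for (ii) you use that the source set stays above $(1-\varkappa_2/2)n$ throughout so that each trial succeeds with probability bounded below (the paper writes $\theta_2/2$ for $n$ large, you write $\theta_2-o(1)$), yielding a geometric bound $(1-\theta_2/2)^{L_n}\to 0$ on the failure probability. Your added remarks on the growth-rate computation and on the uniformity of the conditional bound are helpful elaborations but do not change the argument.
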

\begin{proof}
By Lemma \ref{ltr} (i) in this task, we discover  at most 
$$L_n K^{L_n+1} \psi_2(K)=o(\log n) \textrm{ vertices}.$$
Hence, the cardinality of the source set  is always  larger than $n-o(\log n)$. Therefore, by Lemma \ref{ltr} (ii),  each trial is successful with probability larger than $\theta_2/2$ for $n$ large enough. 

We define $\kI$ the first index of the first successful trail (if there is no such index, we let $\kI= \infty$). Then conditionally on $\kI \leq L_n$, it is stochastically dominated by  a geometric random variable with parameter $\theta_2/2$. Therefore,  
\begin{align*}
\pp(\kI = \infty ) \leq (1- \theta_2/2)^{L_n} =o(1).
\end{align*}
In  other words, Task I is successful w.h.p.
 \end{proof}
\subsubsection{Task II} Suppose that Task I is successful.  Then there is a sequence of vertices $\{u_1, \ldots, u_{L_n}\}$, such that $d(u_{i-1},u_{i}) \leq \psi_1(K)$ for all $2\leq i \leq L_n$, together with  $L_n$ pairs of disjoint seed sets $(F_{u_1,1}, F_{u_1,2}), \ldots, (F_{u_{L_n,1}},F_{u_{L_n},2})$ attached respectively to $(u_i)$. Moreover, the cardinality of  $U^*$-the source set after Task I-  is larger than $  n -o(\log n)$.  Importantly, we have not yet discovered the vertices in  
$$U^+=U^* \cup \bigcup_{1 \leq i \leq L_n} (F_{u_i,1} \cup F_{u_i,2})   .$$
We will show that from $(u_i)$ and $U^+$, we can find a sequence of $\kO(n)$ good vertices which are associated with seed sets. This particularly shows the existence of a graph of class $\kS(\kO(n), K)$. To achieve this goal, the strategy is as follows. First, by introducing {\it experiments}, we show that with probability larger than $2/3$ starting from a good vertex, we can find another good vertex at distance less that $\psi_2(K)$. Moreover, each experiment only explores at most $\psi_3(K)$ vertices with 
$$\psi_3(K)=K \psi_2(K).$$
 Secondly, by the first step the number of good vertices stochastically dominates a random walk biased to the right. By a standard argument for random walk, we deduce that with high probability there are $\kO(n)$ good vertices.

\vspace{0.15 cm}
For any set $F \subset U^+$ of size $K $ and $S \subset U^* \setminus F$  we define an {\bf experiment} $Ep(F,S,K)$ as follows.

\vspace{0,4 cm}
 We write $F$ as $\{z_1, \ldots, z_K \}$. Then we  sequentially operate explorations $E_K(z_1, S_{z_0})$,  $\ldots $, $E_K(z_{K},S_{z_{K-1}})$, where  $S_{z_i}$ is the source set  after the exploration $E_K(z_{i},S_{z_{i-1}})$ for $1 \leq i\leq K$ with $S_{z_0}=S$.
 
If none of these explorations is successful, we declare that $Ep(F,S,K)$ {\it fails}, otherwise we say that it is {\it successful}. In the latter case,   there is a vertex $u$ with  $d(u,F) \leq \psi_1(K)$ together  with  two seed sets $F_{u,1}$ and $F_{u,2}$  of size $K$ (in fact, we even have three sets, but we will only use  two of them). 
\begin{lem} \label{lfw} We have
\begin{itemize}
\item[(i)] the number of vertices used in an experiment is  at most $\psi_3(K) =K \psi_2(K)$,
\item[(ii)]  there exists a positive constant $K_3 \geq K_2$, such that for all $K\geq K_3$, and  $n$ large enough
\begin{align*}
 \pp(Ep(F,S,K) \textrm{ is successful} \mid \kE(K), |S| \geq (1-\varkappa_2/2)n) \geq 2/3,
\end{align*}
with $\varkappa_2$  as in Lemma \ref{gea}. 
\end{itemize}
\end{lem}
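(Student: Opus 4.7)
The plan is to handle the two parts separately, with (i) reducing immediately to Lemma \ref{lep}(i) and (ii) reducing to a simple product argument built on Lemma \ref{lep}(ii).

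For (i), the experiment $Ep(F,S,K)$ is by definition the concatenation of the $K$ consecutive explorations $E_K(z_1,S_{z_0}),\ldots,E_K(z_K,S_{z_{K-1}})$. Lemma \ref{lep}(i) shows that each of them discovers at most $\psi_2(K)$ vertices from its source set. Summing over the $K$ tries gives the bound $K\psi_2(K)=\psi_3(K)$.

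For (ii), the key observation is that $\psi_3(K)$ does not depend on $n$, so the source set stays large throughout the experiment. Indeed, since at most $(i-1)\psi_2(K)\leq \psi_3(K)$ vertices have been removed before the $i$-th try,
\[
|S_{z_{i-1}}|\geq |S|-\psi_3(K)\geq (1-\varkappa_2/2)n-\psi_3(K)\geq (1-\varkappa_2)n
\]
for all $n$ large enough. Hence the hypothesis of Lemma \ref{lep}(ii) is satisfied at each step, so on $\kE(K)$ the $i$-th exploration succeeds with conditional probability at least $\theta_1-o(1)\geq \theta_1/2$ for $n$ large.

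To finish, I would combine these $K$ one-step lower bounds into a geometric estimate. Since the edges of an IRG are independent given the weights and each exploration only inspects edges incident to the currently active source set (which shrinks at every step), the edges revealed by successive explorations are disjoint. Therefore, conditionally on $\kE(K)$, on $|S|\geq (1-\varkappa_2/2)n$ and on the failure of the first $i-1$ tries, the $i$-th exploration still fails with conditional probability at most $1-\theta_1/2$. By the tower property this yields
\[
\pp\bigl(Ep(F,S,K)\text{ fails}\bigm|\kE(K),\,|S|\geq (1-\varkappa_2/2)n\bigr)\leq (1-\theta_1/2)^K,
\]
and choosing $K_3\geq K_2$ so that $(1-\theta_1/2)^{K_3}\leq 1/3$ gives the claimed lower bound $2/3$. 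I expect the main obstacle to lie in making the conditional-independence step fully precise, because one needs to argue that on $\kE(K)$ the bound $\theta_1/2$ (which comes from a limit statement in Lemma \ref{lep}(ii)) is valid \emph{uniformly} in the history of the first $i-1$ explorations, so that the inductive tower argument actually applies.
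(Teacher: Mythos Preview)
Your proof is correct and follows essentially the same route as the paper: part (i) is the same counting via Lemma \ref{lep}(i), and for part (ii) both you and the paper observe that the source sets stay above $(1-\varkappa_2)n$, apply Lemma \ref{lep}(ii) at each step to get success probability $\geq \theta_1/2$, and multiply to bound the failure probability by $(1-\theta_1/2)^K\leq 1/3$ for $K$ large. The paper's version is terser and simply asserts the product bound without discussing the conditional-independence/uniformity point you flag at the end; your more careful treatment of that step is a genuine improvement in rigor rather than a different approach.
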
   
\begin{proof} Part (i) is immediate, since in an experiment, we perform $K$ explorations and each exploration uses at most $\psi_2(K)$ vertices. For (ii), we note that  by (i) and the assumption $|S| \geq (1-\varkappa_2/2)n$,  the source set $S_{z_i}$ has more than $(1- \varkappa_2)n$ vertices for all $i$.
  Hence, by Lemma \ref{lep} (ii),  for all $1 \leq i \leq  K $ and $n$ large enough
 \begin{align*}
 \pp(E_K(z_i,S_{z_{i-1}}) \textrm{ is successful} \mid \kE(K)) \geq \theta_1/2.
 \end{align*}
Thus on $\kE(K)$, the probability that the experiment fails is less than
 \begin{align*}
(1- \theta_1/2)^{K} \leq 1/3,
\end{align*}  
provided $K$ is large enough. 
\end{proof}
We  define {\bf Task II} as follows. First, we fix a constant $K\geq K_3$ and let 
$$\varepsilon_1 = \varkappa_2 / (3\psi_3(K)).$$
We label the seed sets {\it active} and make an order  as follows 
\begin{align*}
 F_{u_1,1} < F_{u_1,2} <F_{u_2,1} < \ldots < F_{u_{L_n},1} < F_{u_{L_n},2}.
\end{align*}
We perform $Ep(F_{u_{L_n},2}, U^*,K)$ and let $F_{u_{L_n,2}}$ be inactive.  If the experiment is successful, we find a vertex $u$ at distance  smaller than $ \psi_1(K)+1$ from $u_{L_n}$ and two seed sets $F_{u,1}$ and $F_{u,2}$  of size $K$.  We  now add $u$ in the sequence: $u_{L_{n+1}}=u$, label these sets $F_{u_{L_{n+1},1}}$ and $F_{u_{L_{n+1},2}}$ active, and make an order
$$F_{u_1,1}< \ldots < F_{u_{L_{n},1}} < F_{u_{L_{n+1},1}} < F_{u_{L_{n+1},2}}.$$
We then perform the  experiment of the newest active set i.e. the active set with the largest order. After an experiment of an active set, we let it be inactive and either get a new vertex with  two  active sets attached on it (if the experiment is successful), or get nothing (otherwise).

 Continue this procedure until  one of the three following conditions is  satisfied.  
\begin{itemize}
\item[$\bullet$] There is no more active set. We declare that Task II fails.
\item[$\bullet$] We do more than $[\varepsilon_1 n]$ experiments. We declare that Task II fails.
\item[$\bullet$] We have more than $[\varepsilon_1 n/4]$ active sets. We declare that Task II is successful.
\end{itemize}
\begin{prop} \label{pc2} For all $K\geq K_3$, we have
$$  \lim_{n \rightarrow \infty} \pp (\textrm{Task II is successful} \mid \kE(K),  \textrm{Task I is successful})=1.$$
\end{prop}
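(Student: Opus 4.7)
\medskip
\noindent\emph{Proof plan.}
The plan is to reinterpret Task II as a biased random walk on the number of active seed sets, and to show that this walk reaches level $[\varepsilon_1 n/4]$ before either exhausting the active stock or performing more than $[\varepsilon_1 n]$ experiments. Let $A_k$ denote the number of active seed sets just after the $k$-th experiment, so that $A_0 = 2 L_n$ from the output of a successful Task~I. Every experiment removes the newest active set, and creates two new active sets exactly when it is successful; hence $A_k - A_{k-1} \in \{-1,+1\}$.

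First I would verify that the source set fed to every experiment in Task~II has cardinality at least $(1-\varkappa_2/2) n$, so that Lemma~\ref{lfw}(ii) applies and each experiment succeeds with conditional probability at least $2/3$. Proposition~\ref{pc1}(i) gives $|U^*| \geq n - o(\log n)$, while by Lemma~\ref{lfw}(i) each experiment consumes at most $\psi_3(K)$ vertices of the source set. The choice $\varepsilon_1 = \varkappa_2/(3\psi_3(K))$ then guarantees that after $[\varepsilon_1 n]$ experiments at most $\varkappa_2 n/3$ vertices have been removed, so the source set remains above $(1-\varkappa_2/2) n$ throughout, for $n$ large.

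Since successive explorations draw on disjoint fresh vertices, and the coupling of Proposition~\ref{ppe} makes the outcome of each exploration depend only on the remaining source set, the conditional probability that the $k$-th experiment is successful given the history $\kF_{k-1}$ is always at least $2/3$ on $\kE(K) \cap \{\text{Task I successful}\}$. Consequently $(A_k)_{k \geq 0}$ stochastically dominates a biased random walk $S_k = 2 L_n + Y_1 + \cdots + Y_k$ with i.i.d. increments $\pp(Y=+1) = 2/3$, $\pp(Y=-1) = 1/3$, of drift $1/3$ and down-step ratio $\rho = 1/2$.

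It then suffices to show that with high probability (i) $(S_k)$ never visits $0$ and (ii) $S_{[\varepsilon_1 n]} \geq \varepsilon_1 n/4$. The gambler's ruin estimate for the biased walk yields $\pp(\exists k : S_k \leq 0) \leq \rho^{2 L_n} = 2^{-2 L_n}$, which vanishes since $L_n = [\log \log \log n] \to \infty$; Cram\'er's theorem yields $\pp(S_{[\varepsilon_1 n]} \leq \varepsilon_1 n/4) \leq e^{-c n}$ for some $c > 0$, as $1/4$ lies strictly below the drift $1/3$. On the intersection of these two events, $(A_k)$ reaches $[\varepsilon_1 n/4]$ within $[\varepsilon_1 n]$ experiments without ever running out of active sets, so Task~II is successful. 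The only real subtlety is the conditional-independence step justifying the stochastic domination; everything else is a standard biased random walk computation.
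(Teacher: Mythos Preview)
Your proposal is correct and follows essentially the same approach as the paper: both reduce Task~II to the analysis of a $(+1,-1)$ biased random walk with up-probability $2/3$ starting from $2L_n$, after checking via $\varepsilon_1 = \varkappa_2/(3\psi_3(K))$ that the source set stays above $(1-\varkappa_2/2)n$ throughout. The paper invokes the optional stopping theorem and the law of large numbers where you cite the gambler's ruin bound and Cram\'er's theorem, but these are equivalent ways of handling the same random walk estimates.
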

\begin{proof}
By Lemma \ref{lfw} (i),  the first $[\varepsilon_1 n]$ experiments use at most $[\varkappa_2 n/3]$ vertices. Therefore, during Task II, the source set  always has  cardinality larger than $(1-\varkappa_2/2)n$. Thus by Lemma \ref{lfw} (ii), on $\kE(K)$ during the time to perform this task, each experiment is successful with probability larger than $2/3$. 

Therefore,  the number of active sets  stochastically dominates a random walk $(R_i)$ satisfying $R_0 = 2L_n$ and 
\begin{align*}
R_{i+1} & = R_i +1 \textrm{ with prob. } 2/3 \\
R_{i+1} & = R_i -1 \textrm{ with prob. } 1/3. 
\end{align*} 
Define 
\begin{align*}
T_0 =\inf \{i: R_i =0\} \qquad \textrm{ and } \qquad T_1 = \inf \{i: R_i \geq [\varepsilon_1 n/4]\}.
\end{align*}
Then using the optional stopping  theorem, we get 
\begin{align*}
\pn(T_1 \leq T_0) = 1-o(1).
\end{align*}
On the other hand, the law of large numbers gives that
\begin{align*}
\pn(T_1 \leq [\varepsilon_1 n]) \geq \pn(R_{[\varepsilon_1n]} \geq [\varepsilon_1 n/4]) = 1-o(1).
\end{align*}
It follows from the last two inequalities that
\begin{eqnarray*}
 & &\pp(\textrm{Task II is successful} \mid \kE(K),  \textrm{Task I is successful}) \\
  &  \geq &  \pp(T_1 \leq \min \{T_0, [\varepsilon_1 n] \}) =1-o(1),
\end{eqnarray*}
which proves the result.
\end{proof}
\subsubsection{Proof of Proposition \ref{pcm}} By Propositions \ref{pc1}, \ref{pc2} and \eqref{kek}, we can assume that both Tasks I and II are successful. Then we have more than $[\varepsilon_1 n/4]$ active sets. Observe that a vertex is attached to at most  two active sets.  Then the number of vertices having at least one active set is larger than $[\varepsilon_1n/8]$. Therefore, w.h.p. $G_n$ belongs to the class $\kS([cn],2K)$ with $c=\varepsilon_1/8$. Moreover, $K$ can be chosen arbitrarily large, so   Proposition \ref{pcm} has been proved.   
\section{Contact process on Erdos-R\'enyi random graphs}
We recall the definition of $ER(n,p/n)$-the Erdos-R\'enyi graph with parameter $p$. Let $V_n=\{v_1,\ldots,v_n\}$ be the vertex set.  Then for $1 \leq i \neq j\leq n$, we independently draw an edge between $v_i$ and $v_j$ with probability $p/n$. 
\begin{prop} \label{ped}  Let $\tau_n $ be the extinction time of the contact process on $ER(n,p/n)$ starting with all  sites infected. There exists   a positive constant  $C$, such that for any $\lambda >0$ and  $ p\geq [C/h(\lambda)]!$,
 $$\pn(\tau_n \geq \exp(cn))=1 -o(1),$$
 with $h(\lambda)$ as in Proposition \ref{tng} and $c=c(\lambda)$ a positive constant.  Moreover, in this setting
\begin{align*} 
\frac{\tau_n}{\en (\tau_n)}\quad  \mathop{\longrightarrow}^{(\kD)}_{n\to \infty} \quad  \kE(1). 
\end{align*} 
 \end{prop}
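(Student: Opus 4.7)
The plan is to adapt the strategy of Section 3 to the simpler setting of $ER(n,p/n)$. The key conceptual point is that, although the Galton--Watson tree with $\poi(p)$ offspring associated to the local limit of $ER(n,p/n)$ does not satisfy hypothesis $(H3')$ (the Poisson tail decays super-exponentially, so no $\varphi(k)\to\infty$ can make $g_k e^{k/\varphi(k)}\geq 1$), we can nevertheless reproduce the conclusion of Proposition \ref{pcm} by taking $p$ large enough that vertices of high degree are abundant. Specifically, I would set $M:=[C/h(\lambda)]$ and observe that for $p \geq M!$, Chernoff bounds yield $\pp(\poi(p)\geq 3M) \geq 1-o(1)$, which is amply sufficient to replace the delicate tree-exploration estimate of Lemma \ref{lqk}.

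With this in hand, I would redefine the exploration process of Section 3 with $\psi_1(K)=1$: rather than exploring a deep neighborhood of depth $[K/\sqrt{\varphi(3K)}]$ to locate a vertex of degree $\geq 3K$, one simply checks whether $|\kN(v,U)|\geq 3K$, which now holds with probability close to one. Consequently the analogues of the trials $Tr(U,L_n,K)$ and of Task I succeed with high probability at each step (not merely with positive probability), and Task II produces a chain of $[cn]$ consecutive star centers, each at distance at most $2$ from the next. In summary, one obtains $ER(n,p/n) \in \kS([cn],M)$ w.h.p., which is the exact analogue of Proposition \ref{pcm}.

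With this structural fact at hand, Proposition \ref{tng} applied with $\psi_1(M)=1$ yields the lower bound $\tau_n\geq\exp(cn)$ w.h.p., since the condition $h(\lambda)M \geq C\psi_1(M)$ reduces to $h(\lambda)M\geq C$, automatic by the choice of $M$. The matching upper bound follows from Lemma 3.4 in \cite{C2}, as in the proof of Theorem \ref{mt}, using that $|E_n| \asymp n$ w.h.p. Finally, the convergence $\tau_n/\E(\tau_n)\to\kE(1)$ is a direct application of Proposition \ref{cvel}, via the classical facts that a super-critical Erd\H{o}s--R\'enyi graph has giant-component diameter $\kO(\log n)$ and second largest component of size $\kO(\log n)$. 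The hard part will be purely bookkeeping: tracking the constants through the combinatorial estimates of the analogues of Lemmas \ref{lep}--\ref{lfw} to justify the crude threshold $[C/h(\lambda)]!$, which is surely far from optimal but convenient for a clean statement.
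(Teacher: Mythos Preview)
Your proposal is correct in spirit and would work, but it takes a genuinely different route from the paper's own proof. You propose to recycle the exploration/trial/task machinery of Section~3 with the simplification $\psi_1(K)=1$, so that an exploration reduces to checking whether $|\kN(v,U)|\geq 3K$, which indeed holds with probability close to $1$ once $p$ is of factorial size in $M$. This is a valid way to obtain a chain of stars with consecutive centers at distance at most $2$, i.e.\ membership in $\kS([cn],M)$, and the rest of your argument (Proposition~\ref{tng} with $\psi_1(M)=1$, the upper bound via \cite{C2}, and the convergence in law via Proposition~\ref{cvel}) goes through as you describe.

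The paper, however, bypasses the exploration machinery entirely and gives a more direct structural argument (Lemma~\ref{lscm}). It splits the vertex set into two halves $A$ and $A^c$; for each $v_i\in A$ it greedily reserves $M$ neighbors in $A^c$ that have not yet been claimed, and a martingale/Doob-inequality computation shows that a positive proportion $\Gamma\subset A$ of vertices succeed. The induced graph on $\Gamma$ is then a super-critical Erd\H{o}s--R\'enyi graph (this is where the threshold $p\geq 16M!$ enters), and the Ajtai--Koml\'os--Szemer\'edi theorem (Lemma~\ref{laks}) yields a path of length $\varkappa n$ inside it. That path, together with the reserved $M$-leaf stars, gives a copy of $S([cn],M)$: a genuine path of \emph{adjacent} star centers, which is even stronger than the distance-$2$ chain your method produces. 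The paper's approach is shorter and uses only classical random-graph tools; your approach has the virtue of reusing Section~3 verbatim, at the cost of carrying around more bookkeeping than the Erd\H{o}s--R\'enyi setting actually requires.
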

Let us denote by $S(\ell,M)$ the graph obtained by gluing to each vertex in $\llbracket 1,n \rrbracket$ a star graph of size $M$. Similarly to Proposition \ref{tng}, we get that if $h(\lambda) M $ is large enough, then the extinction time of the contact process on $S(\ell,M)$ is exponential in $\ell \times M$ w.h.p. Therefore, similarly to Theorem \ref{mt}, Proposition \ref{ped} follows from the following lemma.
\begin{lem} \label{lscm}  For any $M$, there exists a positive constant $c$, such that   if $p \geq 16 M!$ then w.h.p. $ER(n,p/n)$ contains as a subgraph a copy of $S([cn],M)$.
\end{lem}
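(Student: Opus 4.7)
The plan is to closely follow the two-stage greedy exploration of Proposition \ref{pcm}, adapted to the homogeneous Erdős--Rényi setting. Because the hypothesis $p \ge 16 M!$ makes the Poisson$(p)$-tail $\pp(\poi(p) < M+1)$ extremely small, the auxiliary function $\psi_1$ effectively collapses to $1$: whenever we examine a vertex $v$ in a source set $U$ of size $\ge n/2$, the direct neighborhood of $v$ in $V_n \setminus U$ already contains $\ge M+1$ fresh vertices with probability close to $1$, and no multi-hop exploration is needed.

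\emph{Local estimate (the analog of Lemma \ref{lep}(ii)).} For any vertex $v$ and any set $U \subset V_n \setminus \{v\}$ with $|U| \le n/2$, the number of neighbors of $v$ in $V_n \setminus (U \cup \{v\})$ is, conditionally on the edges inside $U$, distributed as $\bin(n-|U|-1, p/n)$ and thus stochastically dominates $\bin(\lceil n/2 \rceil, p/n)$. A short Poisson/Chernoff computation gives, under $p \ge 16 M!$,
\[
\pp\bigl(|\kN(v, V_n \setminus U)| \ge M+1 \,\bigm|\, \text{edges inside } U\bigr) \;\ge\; 1 - (M+1)\frac{(p/2)^M}{M!}\,e^{-p/2} \;\ge\; 1 - e^{-p/4},
\]
so the conditional success probability is larger than any prescribed constant $\theta < 1$.

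\emph{Greedy two-stage construction.} One then runs the exact two-stage exploration of Section 3.2, with the only modification that $E_K(v,U)$ is replaced by the trivial one-step exploration which inspects the direct neighborhood of $v$ in $U$ and declares success as soon as it contains $\ge M+1$ vertices. Task I produces a first block of $L_n = [\log\log\log n]$ connected star-centers: the branching process of Lemma \ref{ltr} now has each node succeed with probability $\ge \theta > 1/2$ and spawn $M$ children, hence is super-critical as soon as $M\theta>1$. Task II then enlarges this to $[cn]$ centers via the same biased random walk on the number of active sets: each successful experiment creates two new active sets and consumes the current one, so the walk has increments $+1$ with probability $\ge \theta$ and $-1$ otherwise, hence strictly positive drift; by optional stopping together with the law of large numbers (exactly as in the proof of Proposition \ref{pc2}) the walk reaches $[\varepsilon_1 n]$ before returning to $0$. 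Collecting the centers and the $M$ leaves reserved at each yields the desired copy of $S([cn],M)$.

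\emph{Main obstacle.} The only non-trivial point is the bookkeeping needed to keep the local estimate conditionally valid throughout the sequential construction; concretely, one must reveal only the edges incident to the vertex currently being explored, so that every yet unrevealed edge remains independently open with probability $p/n$. This is the standard edge-exposure filtration for $ER(n,p/n)$ and introduces no genuine new difficulty beyond what was already handled in Section 3.
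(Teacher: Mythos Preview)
Your approach is genuinely different from the paper's. For Lemma~\ref{lscm} the paper does \emph{not} port the two-stage exploration of Section~3. Instead it splits $V_n$ into two halves $A$ and $A^c$; for each vertex of $A$ it greedily reserves $M$ pairwise disjoint neighbours in $A^c$, and a short martingale argument shows that the set $\Gamma\subset A$ of vertices for which this succeeds has $|\Gamma|\ge n/(8M!)$ w.h.p. The edges \emph{inside} $\Gamma$ have not been inspected, so the induced graph on $\Gamma$ is again Erd\H{o}s--R\'enyi with edge probability $p/n$; the hypothesis $p\ge 16M!$ makes it supercritical, and the Ajtai--Koml\'os--Szemer\'edi long-path theorem (Lemma~\ref{laks}) then supplies a path of length $\varkappa n$ inside $\Gamma$. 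Attaching the reserved leaves in $A^c$ gives the copy of $S([cn],M)$. Thus the paper obtains the \emph{path} structure for free from an external result, at the price of the crude threshold $p\ge 16M!$.

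Your route can in principle be completed and is more self-contained (no call to Lemma~\ref{laks}, and it would in fact yield a much weaker threshold, $p$ linear in $M$). But the sketch has a real gap, and it is not the one you flag. The biased-walk analysis of Task~II only shows that the number of \emph{active sets} reaches $[\varepsilon_1 n]$; it does not show that the centers lie along a path in $G_n$, which is exactly what $S([cn],M)$ demands (consecutive centers must be \emph{adjacent}). A~priori the procedure manufactures a binary \emph{tree} of centers, and a supercritical branching tree of size $\Theta(n)$ has height only $O(\log n)$. What rescues the argument is the ``newest-first'' rule: with it the stack height never exceeds the depth of the current vertex by more than $2$, so at the stopping time the root-to-current path in the tree already has length $\Omega(n)$, and since in your one-step version each child center is a neighbour of its parent, this is a genuine path in $G_n$. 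You also need to set aside $M$ leaves at each center \emph{in addition to} the two seed sets consumed by experiments, so the success criterion must be $\ge 3M$ fresh neighbours (say), not $\ge M+1$; otherwise centers whose two seed sets have both been processed carry no reserved leaves. Neither of these is the edge-exposure bookkeeping you identify as the main obstacle.
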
 
To prove Lemma \ref{lscm}, we will use the following.
\begin{lem} \cite[Theorem 2]{AKS} \label{laks}
If $p >1$ then w.h.p.  $ER(n,p/n)$  contains a path of length  $\varkappa n$ with some positive constant $\varkappa = \varkappa (p)$. 
\end{lem}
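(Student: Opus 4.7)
The plan is to produce the caterpillar $S([cn],M)$ by finding $\Theta(n)$ vertex-disjoint stars $K_{1,M-1}$ using one independent copy of the edges, and then threading their centers into a long path using another independent copy, where Lemma~\ref{laks} delivers the path. Let $G_1,G_2$ be two \emph{independent} copies of $ER(n,p/(2n))$; since $1-(1-p/(2n))^2\le p/n$, the union $G_1\cup G_2$ is stochastically dominated by $G\sim ER(n,p/n)$, and it is enough to exhibit $S([cn],M)$ in $G_1\cup G_2$. The case $M=1$ is immediate from Lemma~\ref{laks}, so assume $M\ge 2$, in which case $p_1:=p_2:=p/2\ge 8\,M!$.

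For Step~1 (disjoint stars in $G_1$) I would use a lazy greedy algorithm: initialize $U=V_n$ and, at round $k\le\lfloor n/(2M)\rfloor$, pick an arbitrary $v_k\in U$ and expose the (previously untouched) $G_1$-edges between $v_k$ and $U\setminus\{v_k\}$. If $v_k$ has at least $M-1$ such neighbors, mark $v_k$ as a star center, fix $M-1$ of these neighbors as its leaves, and remove all $M$ vertices from $U$; otherwise remove only $v_k$. Throughout the algorithm $|U|\ge n/2$, so the number of exposed $U$-neighbors of $v_k$ stochastically dominates $\bin(n/2-1,p_1/n)$, and a Poisson tail estimate yields a per-round success probability at least $1-\delta_M$ with $\delta_M\le (M-1)e^{-4M!}(p_1/2)^{M-2}/(M-2)!<1/4$ (using $p_1/2\ge 4\,M!$). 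Since lazy exposure makes the round-$k$ success indicator Bernoulli of parameter $\ge 1-\delta_M$ conditionally on the past, a Chernoff/Azuma bound produces at least $K\ge (1-2\delta_M)n/(2M)\ge n/(4M)$ successful rounds w.h.p., yielding centers $u_1,\ldots,u_K$ together with their pairwise disjoint leaf sets.

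For Step~2 (long spine via $G_2$), condition on $G_1$ so that the center set $C=\{u_1,\ldots,u_K\}$ is fixed; by independence, $G_2[C]$ has the law of $ER(K,p_2/n)=ER(K,(Kp_2/n)/K)$. Its effective parameter satisfies $Kp_2/n\ge c_1 p_2\ge p/(8M)\ge 2(M-1)!\ge 2>1$ for every $M\ge 2$, so Lemma~\ref{laks} provides, w.h.p., a path $u_{i_1}u_{i_2}\cdots u_{i_L}$ of length $L=\kappa(2(M-1)!)\cdot K=\Theta(n)$ inside $G_2[C]$. Taking this path as the spine and attaching to each $u_{i_j}$ its $M-1$ leaves from Step~1 yields a subgraph of $G_1\cup G_2\subseteq G$ in which consecutive spine vertices are joined by $G_2$-edges and each center-to-leaf pair by a $G_1$-edge. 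All vertices involved are pairwise distinct because the stars from Step~1 are vertex-disjoint and the spine lies entirely inside $C$, so this subgraph is precisely a copy of $S([cn],M)$.

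The main technical point is Step~1: converting the single-round success probability into a w.h.p.\ linear lower bound on $K$ despite the sequential, history-dependent nature of the greedy. The lazy edge-exposure is the key device, since it makes the edges revealed in each round fresh Bernoulli$(p_1/n)$'s independent of the entire past, so the $\Theta(n)$ success indicators admit a clean concentration inequality. Everything else---the independence of $G_1$ and $G_2$, the Poisson tail estimate for $\delta_M$, and the application of Lemma~\ref{laks} to the random subgraph $G_2[C]$---is routine.
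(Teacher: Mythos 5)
Your proposal does not prove the statement at hand. The statement is Lemma~\ref{laks} itself, i.e.\ the Ajtai--Koml\'os--Szemer\'edi theorem that a supercritical $ER(n,p/n)$ with $p>1$ contains w.h.p.\ a path of length $\varkappa(p)\,n$; in the paper this is a quoted result from \cite{AKS} and is used as a black box. What you have written is instead an argument for Lemma~\ref{lscm} (the existence of the caterpillar $S([cn],M)$ in $ER(n,p/n)$ for $p$ large), and at the one place where a long path is actually needed --- threading the star centers into a spine inside $G_2[C]$ --- you simply invoke Lemma~\ref{laks}. So with respect to the assigned statement the argument is circular: the entire combinatorial content of the claim (why a sparse supercritical random graph contains a linearly long path at all) is assumed, not proved. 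A genuine proof would require, e.g., the rotation--extension argument of the original AKS paper, or the short depth-first-search argument of Krivelevich and Sudakov showing that when the exploration of $ER(n,c/n)$, $c>1$, has revealed a positive fraction of the vertices, the DFS stack must have linear size, which yields a path of length $\varepsilon(c)n$; nothing of this kind appears in your write-up.

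As a side remark, read as a proof of Lemma~\ref{lscm} your two-round exposure scheme (greedy vertex-disjoint stars in $G_1$, then a spine through the centers in an independent copy $G_2$) is a reasonable alternative to the paper's route, which instead splits $V_n$ into $A$ and $A^c$, uses a martingale/Doob inequality to extract a linear set $\Gamma$ of vertices of $A$ each owning $M$ disjoint neighbors in $A^c$, and then applies Lemma~\ref{laks} to the Erd\H{o}s--R\'enyi graph induced on $\Gamma$. Both routes, however, rest on Lemma~\ref{laks}; neither constitutes a proof of it.
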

\noindent{\it Proof of Lemma \ref{lscm}}. We first  define 
$$n_1=[n/2], \quad  \quad A=\{v_1, \ldots, v_{n_1-1}\} \quad \textrm{ and } \quad A^c=V_n \setminus A.$$
For $v_i \in A$ and $v_j \in A^c$, define
$$Y_{i,j}= 1(v_i \sim v_j).$$
 Then $(Y_{i,j})$ are i.i.d. Bernoulli  random variables with mean $p/n$. We set $B_0= \varnothing$ and
$$\sigma_1 = \inf \big \{ j \leq n : \sum \limits_{k=n_1}^j Y_{1,k} \geq M  \big \},$$
  with the  convention $\inf \varnothing = \infty.$
Define
\begin{displaymath}
B_1 = \left \{ \begin{array}{ll}
\{k:  k \leq \sigma_1 , Y_{1,k}=1 \} & \textrm{ if } \quad \sigma_1 < \infty ,\\
B_0 & \textrm{ if }\quad \sigma_1 = \infty .
\end{array} \right.
\end{displaymath} 
Suppose that $\sigma_i $ and $ B_i$  have been already defined. Then we set
$$\sigma_{i+1}= \inf \big \{ j \leq n : \sum \limits_{k=n_1}^j Y_{i+1,k}1(k \not \in B_i) \geq M \big \}, $$
and 
 \begin{displaymath}
B_{i+1} = \left \{ \begin{array}{ll}
B_i \cup \{k:  k \leq \sigma_{i+1} , k \not \in B_i,  Y_{i+1,k}=1 \} & \textrm{ if } \quad \sigma_{i+1} < \infty ,\\
B_i & \textrm{ if } \quad \sigma_{i+1} = \infty .
\end{array} \right.
\end{displaymath} 
Then  
\begin{displaymath}
|B_{i+1}| = \left \{ \begin{array}{ll}
|B_i|+M & \textrm{ if } \quad \sigma_{i+1} < \infty ,\\
|B_i| & \textrm{ if } \quad \sigma_{i+1} = \infty .
\end{array} \right.
\end{displaymath} 
Hence, for all $i \leq [n/M]$,
\begin{align} \label{bim}
|B_i| \leq i M.  
\end{align}
We now define
$$Y_i= 1(\sigma_i < \infty ).$$
Then 
\begin{eqnarray*}
\pn (Y_{i+1}=1 \mid B_i) &=& \pn \left( \sum \limits_{k=n_1}^n Y_{i+1,k} 1(k \not \in B_i) \geq M \mid B_i \right) \\
& = & \pn \left( \bin(n-n_1+1 -|B_i|, p/n) \geq M \mid B_i\right).
\end{eqnarray*}
It follows from  \eqref{bim} that when $i \leq [n/4M]$,  
$$n-n_1+1 - |B_i| \geq  [n/4].$$
  Thus
\begin{eqnarray} \label{bs}
\en (Y_{i+1} \mid B_i) &= & \pn (Y_{i+1} =1 \mid B_i) \geq  \pn(\bin([n/4],p/n) \geq M) \notag \\
 &\geq & \pn(\poi(p/4) \geq M)/2 \geq  1/(M-1)!
\end{eqnarray}
with $p \geq 8$ and $n$ large enough. Now we set 
$$\Gamma = \{i \leq [n/4M]: Y_i =1\},$$
and let 
\begin{equation} \label{h2}
Z_k= \sum \limits_{i=1}^k (Y_i - \en(Y_i \mid B_{i-1})).
\end{equation} 
Then $(Z_k)$ is a $(\sigma(B_k))$- martingale satisfying $|Z_k-Z_{k-1}| \leq 1$ for all $1 \leq k\leq [n/4M]$. Thus it follows from   Doob's martingale inequality  that
$$\pn \left( \max \limits_{k \leq \ell} |Z_k| \geq x \right) \leq \frac{\en(Z_{\ell}^2)}{x^2} \leq \frac{\ell}{x^2} \quad \textrm{ for all } \quad \ell \leq [n/4M], \,\, x >0.$$
In particular,  w.h.p.
\begin{equation} \label{znm}
|Z_{[n/4M]}| = o( n).
\end{equation}
It follows from $(\ref{bs}), \eqref{h2}$ and $ (\ref{znm})$  that w.h.p.
\begin{eqnarray*}
|\Gamma| = \sum_{i=1}^{[n/4M]} Y_i  &=&  Z_{[n/4M]} + \sum_{i=1}^{[n/4M]}  \en(Y_i \mid B_{i-1}) \notag\\
&\geq & Z_{[n/4M]} + [n/4M]/(M-1)!  \geq   n/(8M!).
\end{eqnarray*}
 Conditionally on $|\Gamma| \geq n /(8M!)$, the graph induced in $\Gamma$ contains a Erdos-R\'enyi graph, say  $H_n$, of size $[n /(8M!)]$ with probability of connection $p/n$. Observe that  $H_n$ is super critical  when
$$p /(8M!) \geq 2. $$
 Therefore, by Lemma \ref{laks},  w.h.p. $H_n$  contains a path of length $\varkappa n$, with $\varkappa = \varkappa(p,M)$ if $p\geq 16M!$.

On the other hand,   each vertex in $\Gamma$ has at least $M$  disjoint neighbors in $A^c$. 
Combining these facts proves  the result. \hfill $\square$

\begin{rem} \emph{We can use Proposition \ref{ped} to prove the lower bound on the extinction time in Theorem \ref{mt} for a small class of weights. Indeed, for any $p$ we define
$$A_p=\{v_i: w_i \geq \sqrt{4p \en(w)}\}.$$
Then 
$$|A_p| = (\gamma_p +o(1)) n,$$
with $$\gamma_p= \pp(w \geq \sqrt{4p \en(w)}).$$
On the other hand, for any $v_i $ and $ v_j$ in $A_p$,
\begin{eqnarray*}
\pp(v_i \sim v_j) &= &1 -\exp(-w_i w_j / \ell_n) \\
& \geq &\frac{w_i w_j}{2 \ell_n} 1(w_i w_j \leq \ell_n) + \frac{1}{2} 1(w_i w_j > \ell_n) \\
& \geq &\frac{4p \en(w)}{2\ell_n}  \geq \frac{p}{n}, 
\end{eqnarray*}
since $\ell_n = n (\en(w)+o(1))$.  Therefore, the graph induced in $A_p$ contains a Erdos-R\'enyi graph $ER([\gamma_p n], p/n)$. By proposition \ref{ped}, if $p \gamma_p \rightarrow \infty$,  for all $\lambda >0$, w.h.p.  the extinction time of the contact process on $ER([\gamma_p n],p/n)$ is exponential in $n$. Hence, when
\begin{align} \label{cpw}
\lim_{p \rightarrow \infty} p \pp(w \geq \sqrt{4p \en(w)}) = \infty, 
\end{align}
 w.h.p. the extinction time on the IRG($w$) is exponential in $n$ for all $\lambda >0$. \\
The condition \eqref{cpw} is satisfied for some weight $w$, for example the power-law distribution with exponent between $2$ and $3$.
} 
\end{rem}
%\begin{rem} \emph{ Another distribution of weights has been attracted interest is the power-law distribution with infinite mean (when the exponent is less than or equal $2$). The proof for this graph is essentially the same as for the configuration model. In fact, as in \cite{CS} we can show that in that case  w.h.p. IRGs contains a copy of a two-step star graphs and the proof follows from the same argument. }  
%\end{rem}

\vspace{0.2 cm}
\noindent {\bf Acknowledgments}. I am  grateful to  Bruno Schapira for his help and  suggestions during the preparation of this work.  I wish to thank also the anonymous referee for carefully reading the manuscript and many valuable comments. This work is supported by the Vietnam National Foundation for Science and Technology Development (NAFOSTED) under  Grant number 101.03--2017.07.


\begin{thebibliography}{99}
 \bibitem{AKS}  M. Ajtai, J. Koml\'os, E. Szemer\'edi. {\it The longest path in a random graph,} Combinatorica. 1, 1-12 (1981).
 
\bibitem{BBCS}  N. Berger, C. Borgs, J. T. Chayes, A. Saberi. { \it On the spread of viruses on the internet},  
Proceedings of the sixteenth annual ACM-SIAM symposium on discrete algorithms, 301-310, (2005). 



\bibitem{BJR} B. Bollob\'as, S. Janson, O. Riordan. {\it The phase transition in inhomogeneous random graphs}, Random. Struct. Algorithms. 31, 3-122 (2007).

\bibitem{BS}  I. Benjamini, O. Schramm. {\it Recurrence of distributional limits of finite planar graphs}, Electron. J. Probab. 
 6 (23), 13 pp. (2001).  

\bibitem{CMMV} M. Cranston,  T. Mountford, J. -C. Mourrat, D. Valesin. {\it The contact process on finite homogeneous trees revisited}, ALEA Lat. Am. J. Probab. Math. Stat. 11, 385--408, (2014).

\bibitem{CD}  S. Chatterjee, R. Durrett. {\it Contact process on random graphs with degree power law distribution have critical value zero}, Ann. Probab. 37, 2332-2356 (2009). 

\bibitem{CS} V. H. Can,  B. Schapira. {\it Metastability for the contact process on the configuration model with infinite mean degree}, Electron. J. Probab.  20, 1--22 (2015). 

\bibitem{C1} V. H. Can. {\it Metastability for the contact process on the preferential attachment graph}, accepted for publication in Internet Mathematics, doi:10.24166/im.08.2017.


\bibitem{C2} V. H. Can. {\it Super-exponential extinction time of the contact process on random geometric graphs}, accepted for publication in Combinatorics, Probability and Computing.


\bibitem{H} T. E. Harris. {\it Contact interactions on a lattice}, Ann. probab. 2, 969--988 (1974).

\bibitem{H15} R. van der Hofstad. {\it Random graphs and complex networks.} Available at http://www.win.tue.nl/~rhofstad/NotesRGCN.html. 

 \bibitem{L}  T. M. Liggett. {\it Stochastic Interacting Systems: Contact, Voter and Exclusion Processes.} Grundlehren de Mathematischen Wissenschaften  {\bf{324}}, Springer (1999).  

\bibitem{LS} S. Lalley, W. Su. {\it Contact process on random regular graphs}, to appear in Annals of Applied Probability. 

%\bibitem{M} T. Mountford. {\it A metastable result for the finite multidimensional contact process}, Canad. Math. Bull. {\bf 36}  (2), 216--226 (1993).

\bibitem{MMVY} T. Mountford, J. -C. Mourrat, D. Valesin, Q. Yao. {\it Exponential extinction time of the contact process on finite graphs},  Stochastic Process. Appl. 126 (7), 1974--2013 (2016).  

\bibitem{MVY}  T. Mountford, D. Valesin, Q. Yao. {\it Metastable densities for contact processes on random graphs}, Electron. J. Probab.  18,  1-36 (2013).  

\bibitem{MV} J. -C. Mourrat, D. Valesin. {\it Phase transition of the contact process on random regular graphs}, Electron. J. Probab. {\bf 21} (2016), no. 31, 1--17. 

   
\bibitem{MS} L. M\'enard, A. Singh. {\it Percolation by cumulative merging and phase transition for the contact process on random graphs},  Ann. Sci. \'Ec. Norm. Sup\'er. 49,  1189--1238  (2016).

 \bibitem{NR} I. Norros, H. Reittu. {\it On a conditionally Poissonian graph process}, Adv. in Appl. Prob. 38, 59--75 (2006).


\bibitem{P} R. Pemantle. {\it The contact process on trees}, Ann. Probab.  20 (1992), no. 4, 2089--2116.

\bibitem{S} A. M. Stacey. {\it The contact process on finite homogeneous trees},  Probab. Th. Rel. Fields.  121,  551--576 (2001).
%\bibitem{S}  A. M. Stacey. {\it The existence of an intermediate phase for the contact process on trees}, Ann. Probab. {\bf 24} (1996), no. 4, 1711--1726.

\bibitem{SV} B. Schapira, D. Valesin. {\it Extinction time for the contact process on general graphs}, to appear in Probability Theory and Related Field.


\end{thebibliography}
\end{document}